\newtheorem{theorem}{Theorem}
\newtheorem{proposition}{Proposition}
\newtheorem{corollary}{Corollary}
\newtheorem{remark}{Remark}
\newtheorem{lemma}{Lemma}
\newcommand{\Z}[1][]{\ensuremath{{\mathbb{Z}^{#1}} }}
\newcommand{\C}[1][]{\ensuremath{{\mathbb{C}^{#1}} }}
\newcommand{\R}[1][]{\ensuremath{{\mathbb{R}^{#1}} }}
\newcommand{\Q}[1][]{\ensuremath{{\mathbb{Q}^{#1}} }}
\renewcommand{\S}[1][]{\ensuremath{{\mathbb{S}^{#1}} }}
\renewcommand{\H}[1][]{\ensuremath{{\mathbb{H}^{#1}} }}
\def\T{\mathbb{T}}
\def\P{\mathbb{P}}
\def\Re{ \mathrm{Re}\, }
\def\Im{ \mathrm{Im}\, }
\newcommand{\<}{\langle}
\renewcommand{\>}{\rangle}
\newcommand{\pa}{\partial}
\newcommand{\al}{\alpha}
\date{}
\title[Construction of H-minimal Lagrangian submanifolds in $\C^n$]
{Construction of Hamiltonian-minimal Lagrangian submanifolds in complex Euclidean space}
\author{Henri Anciaux}
\address{Henri Anciaux \\
Universidade de S\~ao Paulo \\ 
  IME, Bloco A \\
  1010 Rua do Mat\~ao  \\
Cidade Universit\'aria   \\ 
 05508-090 S\~ao Paulo, BRAZIL 
 \\} \email{henri.anciaux@gmail.com}
\author{Ildefonso Castro}
\address{Departamento de Matem\'{a}ticas \\
Universidad de Ja\'{e}n \\
23071 Ja\'{e}n, SPAIN}
\email{icastro@ujaen.es}
\thanks{Research partially supported by a MEC-Feder grant  MTM2007-61779(Second author)}
\begin{document}
\maketitle

\section{Introduction}

A submanifold $L$ of $\C^{n}$ (or more generally of a symplectic
manifold ${ M}^{2n}$ of real dimension $2n$) is said to be  \em Lagrangian \em if it has
dimension $n$ and if the standard symplectic form $\omega$
vanishes on it. This assumption is equivalent to the fact that the
standard complex structure $J$ maps the tangent bundle of $L$ onto
its normal bundle. A remarkable property of Lagrangian
submanifolds of $\C^{n}$ is that those which are in addition
minimal, (i.e.\ critical points for the volume functional) are in
fact minimizers in their homology class. The reason is that
$\C^{n}$ is endowed with a one-parameter family of calibrations
whose calibrated submanifolds are precisely the  minimal
Lagrangian submanifolds (see \cite{HL}). This property does not extend
to generic K\"{a}hler manifolds, but it does to a certain subclass
of them, called \em Calabi-Yau manifolds \em (see \cite{HL} for a definition). Lagrangian
submanifolds of $\C^{n}$ are locally characterized as being the
gradient graphs $L :=\{ (x + i \, \nabla u(x)), x \in \R^n \} $ of
real-valued functions $u(x).$ In terms of these data, the minimal
submanifold equation is (see \cite{HL}):
$$   \mathrm{Im} \det_{\C} (I + i \, \mbox{Hess}  u)=0.$$
In the case of dimension $2,$ it reduces to the famous \em
Monge-Amp\`ere equation. \em

\medskip

Besides the classical variational problem of minimizing the volume
functional in a homology class (so in particular with respect to
compactly supported variations), there is a natural variational
problem, first introduced by Oh (cf \cite{O}), consisting of minimizing
the volume with respect to Hamiltonian compactly supported
variations. Such variations have the property of preserving the
Lagrangian constraint. We shall say that a Lagrangian submanifold
is \em Hamiltonian-minimal\footnote{Some authors use the terminology \em Hamiltonian stationary \em} \em or \em
H-minimal \em for short if it is a critical point of the volume for
Hamiltonian compactly supported variations. While it is well known
that minimal submanifolds are characterized by the vanishing of
their mean curvature vector $H=\frac{1}{n}\,\mathrm{trace}\,
\sigma$, where $\sigma$ is the second fundamental form of $\Phi $,
it can be proved that a Lagrangian submanifold of $\C^{n}$ is
H-minimal if and only if it satisfies the equation $\mbox{div}
JH=0$, where $\mbox{div}$ denotes the divergence operator.
 In particular, Lagrangian submanifolds with parallel mean
curvature vector are H-minimal.

 The H-minimal equation for a
gradient graph is (cf \cite{SW}):
$$\sum_{j=1}^n \frac{\pa}{\pa x_j} \Delta (\frac{\pa u}{\pa x_j})=0,$$
where $\Delta$ denotes the Laplacian with respect to the induced
metric on $L.$ Denoting partial derivatives by subscripts, the
latter is given in coordinates by
$$ g_{jk} = \delta_{jk}+ \sum_{l=1}^n u_{jl} u_{kl}.$$
In particular the H-minimal equation is of fourth order and its
linearization is the bilaplacian equation.

\medskip

 One physical motivation for studying H-minimal Lagrangian
 submanifold is given by the model of incompressible elasticity:
  a diffeomorphism $\big(X(x,y),Y(x,y) \big)$ between two open subsets $U$ and $V$ of
 $\C$ is incompressible, i.e.
 $ X_x Y_y - X_y Y_x=1,$ if and only if the graph
 $$ L := \big\{ \big(x+iy, X(x,y) - iY(x,y)\big) , x+iy \in U   \big\}$$ is
 Lagrangian. Moreover,  the diffeomorphism $(X,Y)$ minimizes the functional
 $\int_U \sqrt{1+ |\nabla X|^2+ |\nabla Y|^2}$ among incompressible diffeomorphisms
  if and only if $L$ is
 H-minimal (cf \cite{W}).

\medskip

A very important object attached to an oriented Lagrangian submanifold $L$ of
$\C^{n}$ is its \em Lagrangian angle function, \em which is
defined to be the argument of the evaluation of the complex volume
form $ dz_1\wedge\dots\wedge dz_n $. In other words,
if $(e_1,...,e_n)$ is a tangent frame of $L$, its Lagrangian angle function is given by
$$ \beta= \arg dz_1\wedge\dots\wedge dz_n (e_1,...,e_n).$$
The Lagrangian angle if fundamental in the study of variational
problems since it is related to the mean curvature vector of $L$
by the formula $n H = J \nabla \beta$, where $\nabla$ is the gradient for the
induced metric. It follows that $L$ is
minimal if and only if its Lagrangian angle  is equal to a
constant $\beta_0.$ In this case $L$ is in addition calibrated by
the $n$-form $ \Re (e^{-i\beta_0}dz_1\wedge\dots\wedge dz_n).$ Moreover, we can
deduce a characterization of those Lagrangian submanifolds which
are H-minimal in terms of $\beta$: by the equation $\mbox{div} JH=-
\frac{1}{n} \Delta \beta,$ we deduce that a Lagrangian submanifold
is H-minimal if and only if its Lagrangian angle is harmonic for
the induced metric.

\medskip

  Until recently only very
 simple examples where known beyond  the Cartesian products
 of $n$ circles  $ \S^1(r_1) \times ... \times \S^1(r_n) \subset \C^n .$
After the first description of non-trivial H-minimal tori in
\cite{CU1}, the H-minimal cones and tori of $\C^2$ where classified
respectively in \cite{SW} and \cite{HR2}. Then more examples were
discovered in $\C^2$ (cf \cite{A1}), in $\C^n$ (cf \cite{ACR}, \cite{M1}, \cite{CLU})
and in $\C\P^n$ (cf \cite{M1}, \cite{M2}, \cite{CLU}). The goal of the present
paper is to describe in a synthetic way a variety of examples of
 H-minimal Lagrangian immersions in  $\C^{n},$ combining in several
ways curves in two dimensional space forms and Legendrian
immersions in odd dimensional spheres.
  These constructions  appear
 to be generalizations of examples
 already discussed in \cite{ACR}, \cite{AR} and \cite{CLU}. In every case we shall characterize
 the cases in which the submanifold is in addition minimal, or have
 parallel mean curvature vector. We shall also pay attention to the cases in which we
 get compact examples.
Finally we point out
   that the importance of H-minimal
  Lagrangian submanifolds of $\C^n$ is emphasized by a recent
  work of Joyce, Lee and Schoen, where H-minimal Lagrangian
  submanifolds are constructed in arbitrary symplectic manifolds,
  starting from a H-minimal Lagrangian submanifold of $\C^n$
  satisfying some property of rigidity
(cf \cite{JLS}).

\medskip

 The paper is organized as follows: the first section gives some background
  about curves and Legendrian submanifolds in odd-dimensional spheres;
  we shall see in particular that the geometry of Legendrian submanifolds is
  very similar to the one of Lagrangian submanifolds.
 The next sections are devoted to several constructions of
 H-minimal Lagrangian submanifolds using, respectively, $n$ planar curves
 (Section 3), a planar
 curve and a Legendrian immersion (Section 4), a Lagrangian
 surface and two Legendrian immersions (Section 5).

\section{Background material}
\subsection{Legendrian immersions}
Let $\C^{n}=\{(z_1,\dots,z_{n} ), \, \, z_j\in\C, \, 1 \leq j \leq n\}$
be the complex Euclidean space of dimension $n$ endowed with the
bilinear product
\[
(z,w)=\sum_{j=1}^n z_j\bar{w}_j, \quad\forall z,w\in\C^n.
\]
Then $\langle .,. \rangle=\Re (.,.)$ is the Euclidean metric  of
$\C^n$ and $\omega=-\Im (.,.)$ the Kaehler $2$-form on $\C^n$.
Both forms are related by the formula
 $\omega=\langle J.,.\rangle$.

 The symplectic form $\omega$ is exact and one  primitive of it is the the
Liouville $1$-form of  $\Lambda$ defined by $
2\Lambda(v)=\langle v,Jz\rangle$, for all $v\in T_z\C^n$,
$z\in\C^n$.

Next we consider the unit sphere  $\S^{2n-1}:=\{z \in \C^n , \langle z,z \rangle =1 \}$
 and we still
 denote by $\Lambda $ the restriction to $\S^{2n-1}$ of
the Liouville $1$-form of $\C^{n}$. Hence $\Lambda $ is the contact
$1$-form of the canonical Sasakian structure on the sphere
$\S^{2n-1}$. An immersion $\psi$  of
an $(n-1)$-dimensional manifold $N$ into $\S^{2n-1}$ is said to be {\em Legendrian}
if $\psi^* \Lambda \equiv 0$. When it is the case $\psi $ is
isotropic in $\C^{n}$, i.e. $\psi^* \omega \equiv 0$ and, in
particular, the normal bundle of $N$ admits the following
decomposition $T^\perp N = J (TN) \oplus {\rm span \, } \{ J\psi
\} $. In other words  $\psi $ is horizontal with respect to the
Hopf fibration $\Pi:\S^{2n-1} \rightarrow \C\P^{n-1}$, where
$\C\P^{n-1} $ denotes the complex projective space with constant
holomorphic sectional curvature 4. Hence $\Pi \,  \circ \, \psi $
 is a Lagrangian immersion of $N$ into $\C\P^{n-1}$ and the two
 immersions $\psi$ and $\Pi \, \circ \, \psi$ induce the same
metric  on $N$. Conversely, given a Lagrangian immersion ${\phi}$
of a $(n-1)$-dimensional manifold $N$ into $\C\P^{n-1}$, there exists a Legendrian
immersion $\tilde{\psi}$ from the universal covering $\tilde{N}$ of $N$ into
$\S^{2n-1}$ such that $ \phi \, \circ \, \tilde{\Pi}= \Pi \, \circ \, \tilde{\psi},$
where $\tilde{\Pi}$ is the canonical projection $\tilde{N} \to N.$
The immersion $\tilde{\psi}$, which is unique up to a rotation of the form
$e^{i\theta} Id \subset U(n),$
is called the \em Legendrian lift \em of $\phi.$

Next we define $\Omega $ to be the complex $(n-1)$-form on $\S^{2n-1}$ given by
$$
\Omega_z(v_1,\dots,v_{n-1})=\det_{\C} \, \{ z,v_1,\dots,v_{n-1} \} .
$$
Given  a Legendrian  immersion $\psi$ of a
manifold $N$ into $\S^{2n-1}$,  $\psi^* \Omega $ is a complex $(n-1)$-form on
$N$. Suppose that our Legendrian submanifold $N$ is oriented. We define
the  map $\beta_\psi$ by
$$e^{i\beta_\psi (x)}=(\psi^* \Omega)_x (e_1,\dots,e_{n-1}),$$
where $\{ e_1,\dots, e_{n-1} \}$ is an oriented orthonormal frame
in $T_x N.$ This $\R / 2\pi \Z$-valued map is well defined and does not depend on the choice of the frame
$\{ e_1,\dots, e_{n-1} \}.$ In \cite{CLU} it is called the {\em Legendrian
angle} map of $\psi $ and it is proved that
\begin{equation} \label{gradient2}
J\nabla \beta_\psi = (n-1) H_\psi,
\end{equation}
where $\nabla $ is the gradient with respect to the induced metric in $N$ and $H_\psi $ is the mean
curvature vector of $\psi $. Hence a Legendrian
immersion $\psi$ of an oriented manifold
$N$ in $\S^{2n-1}$ is minimal, i.e. $H_\psi \equiv 0$, if and only if the
Legendrian angle map $\beta_\psi $ of $\psi $ is constant.

In analogy with the notion of H-minimality for Lagrangian
submanifolds,  the notion of C-minimality was  introduced in \cite{CLU}
as follows: a Legendrian submanifold is said to be {\em
contact-minimal} (or briefly {\em C-minimal}) if it is a  critical
point of the volume functional with respect to compactly supported
variations which preserve the contact form. It is easy to
prove that a Legendrian immersion $\psi$ is C-minimal
 if and only if $ {\rm div} JH_\psi \equiv 0$,
where div is the divergence operator in $N$. We refer to \cite{CLU} for
the proof of this formula and  further details. In particular,
minimal Legendrian submanifolds and Legendrian submanifolds with
parallel mean curvature vector are C-minimal. A consequence of
Equation (\ref{gradient2}) is that a Legendrian immersion $\psi$ of an
 oriented manifold
$N$  into $\S^{2n-1}$ is C-minimal  if and only if the Legendrian angle $\beta_\psi $
of $\psi $ is a harmonic map, i.e.\ $\Delta \beta_\psi \equiv 0$, where
$\Delta $ is the Laplacian for the induced metric.

There is a close relationship between between minimal and
C-minimal Legendrian submanifolds in odd dimensional spheres and
Lagrangian submanifolds in complex projective spaces. More
precisely, it is proved in \cite{CLU}  that a Legendrian immersion $\psi $ is
minimal (resp.\ C-minimal) in $\S^{2n-1}$ if and only if the
Lagrangian immersion $\Pi \circ \psi$ is minimal (resp.\ H-minimal) in $\C\P
^{n-1}$.

\subsection{Planar curves}
If $\alpha : I \rightarrow \C^*$ is a non-vanishing complex function on an interval $I$ of $\Bbb R,$
the argument of $\alpha $ is the $\R / 2\pi \Z$-valued map given by $\alpha = |\alpha| e^{i\,\arg\alpha}$. It is
easy to check that
\begin{equation}\label{arg1}
(\arg \alpha)' = \frac{\langle \alpha ' , J\alpha \rangle}{|\alpha|^2},
\end{equation}
and so we deduce that
\begin{equation}\label{arg2}
(\arg \alpha ')'  =|\alpha ' | \kappa_\alpha ,
\end{equation}
where $\kappa_\alpha$ is the curvature of $\alpha$.

\subsection{Legendrian curves versus spherical and hyperbolic curves}
Let $\S^3$ and $\H^3_1$ denote the
unit hypersphere  and the unit anti de Sitter space in
${\C}^2$, given respectively by
$$\S^3=\left\{ (z,w)\in {\C}^2,  |z|^2+|w|^2=1 \right\}
,\;\;\H^3_1=\left\{ (z,w)\in {\C}^2, |z|^2-|w|^2=-1
   \right\} .$$
   Let $\gamma \subset \S^3$ and $\alpha \subset \H^3_1$ two Legendrian curves,
both  parametrized by arclength. Then they satisfy
the following relations:
\[
\begin{array}{c}
|\gamma_1|^2+|\gamma_2|^2=1,\hskip.2in
|\gamma'_1|^2+|\gamma'_2|^2=1,\;\;\, \gamma'_1
\bar\gamma_1+\gamma'_2\bar\gamma_2=0,\;\;
    \\
|\alpha_1|^2-|\alpha_2|^2=-1,\;\;
|\alpha'_1|^2-|\alpha'_2|^2=1,\;\; \alpha'_1
\bar\alpha_1-\alpha'_2\bar\alpha_2=0.
\end{array}
\]

\medskip

We now detail the special form  taken by the Hopf projection $\Pi$
in the case $n=2,$ as well as the analogous projection $\H^3_1 \to
\H^2(-1/2).$

Let  $\S^2(1/2):=\{ (x_1 + i x_2,x_3)\in \C \times \R,  x_1^2+x_2^2+x_3^2=1/4
\}$ which is the 2-sphere with radius $1/2$ in $\R^3$. The Hopf fibration $\Pi:\S^3\to
\S^2(1/2)\equiv \C \P^1(4)$ is given by
\[
\Pi(z,w)= \frac{1}{2} \left( 2z\bar w, |z|^2-|w|^2 \right),\;\; (z,w)\in \S^3\subset {\C}^2.
\]
For each Legendrian curve $\gamma(s)$ in $\S^3$, the
projection $\xi=\Pi\circ \gamma$ is a curve in $\S^2(1/2)$.
Conversely,   each curve $\xi$ in $\S^2(1/2)$ gives rise to a
horizontal lift $\tilde{\xi}$ in $\S^3$ via $\Pi$ which is unique
up to a rotation  $e^{i\theta} Id \subset U(2) ,\theta \in {\R}$.

Since the Hopf fibration $\Pi $ is a Riemannian submersion, each
 unit speed Legendrian curve $\gamma $ in $\S^3$
 is projected onto   a unit speed curve $\xi$ in  $\S^2(1/2)$
 with the same curvature function.  In addition:
\begin{align}\label{flasCP}
|\gamma_1|^2=\frac{1}{2}+\xi_3, \, \,
\langle \gamma_1',J\gamma_1\rangle=(\xi \times \xi')_3,
\end{align}
where $\times $ denotes the cross product in ${\R}^3$
and $(\xi \times \xi')_3$ is the third coordinate of $\xi \times
\xi'$ in the 3-space ${\R}^3$ containing $\S^2(1/2)$.

Similarly, let  $\H^2(-1/2)=\{ (x_1 + i x_2,x_3)\in \C \times \R,
x_1^2+x_2^2-x_3^2=-1/4, \, x_3 \geq 1/2  \}$ which is the model of
the real hyperbolic plane of curvature $-4$. The Hopf fibration
$\Pi:\H^3_1\to \H^2(-1/2)\equiv \C\H^1(-4)$ is then given by
\[
\Pi(z,w)= \frac{1}{2} \left( 2z\bar w, |z|^2+|w|^2 \right), \;\;
(z,w)\in \H^3_1\subset \C^2. \] Given a Legendrian curve
$\alpha(t)$ in $\H^3_1$, the projection $\eta=\Pi\circ
\alpha$ is a curve in $\H^2(-1/2)$. Conversely,  each curve $\eta$
in $\H^2(-1/2)$ gives rise to a horizontal lift $\tilde{\eta}$ in
$\H^3_1$ via $\Pi $ which is unique up to a rotation of the form
$e^{i\theta}Id \subset U(2), \theta \in \R$.

Similarly, if $\alpha $ is a unit speed Legendrian curve in
$\H^3_1$, then the projection $\eta $ is also a unit speed curve
in $\H^2(-1/2)$ with the same curvature function. It follows that
\begin{align}\label{flasCH}
|\alpha_1|^2=-\frac{1}{2}+\eta_3, \, \,
\langle \alpha_1',J\alpha_1\rangle=(\eta \times \eta')_3.
\end{align}

\section{Product of planar curves}

The simplest way to obtain Lagrangian submanifolds in $\C^n$ is to
take the Cartesian product of $n$ planar curves.
\begin{proposition}\label{product}
Let $\alpha_j$  be $n$ planar regular curves, and denote by $s_j$,
$1 \leq j \leq n$, the arclength parameter of $\al_j$. Then the
product immersion
$$\Phi(s_1, ... , s_n)=(\alpha_1(s_1), ... , \alpha_n(s_n))$$ is a
flat Lagrangian immersion whose Lagrangian angle map is given by
 $$ \beta_\Phi(s_1, ... , s_n) = \sum_{j=1}^n \arg \alpha'_j (s_j) .$$
In addition,
\begin{equation}\label{Hessproduct}
D_{\pa_{s_j}} \nabla \beta_\Phi =  \frac{d\kappa_j}{ds_j} \pa_{s_j},
\end{equation}
where $\kappa_j$ denotes the curvature of the curve $\alpha_j$, $1
\leq j \leq n$, and $D$ is the Levi-Civita connection of the induced metric,
and
\begin{equation}\label{Deltaproduct}
\Delta \beta_\Phi = \sum_{j=1}^n \frac{d\kappa_j}{ds_j}.
\end{equation}
\end{proposition}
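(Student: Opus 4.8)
The plan is to verify each assertion by a direct computation in the product frame, exploiting that everything decouples variable by variable.

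\textbf{Flatness and the Lagrangian angle.} First I would observe that $\Phi^*\langle\cdot,\cdot\rangle = \sum_j ds_j^2$ since each $\alpha_j$ is parametrized by arclength and the factors live in orthogonal copies of $\C$; hence the induced metric is the flat Euclidean metric on the parameter domain, $\{\pa_{s_j}\}$ is a global orthonormal (indeed parallel) frame, and $\Phi$ is a flat immersion. Lagrangianity is immediate because $\Phi^*\omega = \sum_j \alpha_j^*\omega_j = 0$, each $\alpha_j^*\omega_j$ vanishing for dimension reasons on a curve. For the Lagrangian angle, evaluate the complex volume form: $dz_1\wedge\cdots\wedge dz_n(\pa_{s_1},\ldots,\pa_{s_n}) = \prod_{j=1}^n dz_j(\alpha_j'(s_j)) = \prod_{j=1}^n \alpha_j'(s_j)$, since $d\Phi(\pa_{s_j})$ has only its $j$-th component nonzero, equal to $\alpha_j'(s_j)$. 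Taking arguments gives $\beta_\Phi = \sum_j \arg\alpha_j'(s_j)$ modulo $2\pi\Z$, as claimed.

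\textbf{The Hessian identity \eqref{Hessproduct}.} Since $\{\pa_{s_j}\}$ is orthonormal and parallel, $\nabla\beta_\Phi = \sum_k (\pa_{s_k}\beta_\Phi)\,\pa_{s_k} = \sum_k \frac{\pa}{\pa s_k}\big(\arg\alpha_k'(s_k)\big)\,\pa_{s_k}$, because the $k$-th summand of $\beta_\Phi$ is the only one depending on $s_k$. Now apply \eqref{arg2} from the background section: $\frac{d}{ds_k}\arg\alpha_k' = |\alpha_k'|\,\kappa_k = \kappa_k$, using unit speed. Thus $\nabla\beta_\Phi = \sum_k \kappa_k(s_k)\,\pa_{s_k}$. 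Differentiating with the Levi-Civita connection, and using that $D_{\pa_{s_j}}\pa_{s_k} = 0$ for all $j,k$ (flat parallel frame), only the coefficient derivative survives, and only the $k=j$ term depends on $s_j$: $D_{\pa_{s_j}}\nabla\beta_\Phi = \frac{d\kappa_j}{ds_j}\,\pa_{s_j}$. Equation \eqref{Deltaproduct} then follows by tracing: $\Delta\beta_\Phi = \sum_j \langle D_{\pa_{s_j}}\nabla\beta_\Phi, \pa_{s_j}\rangle = \sum_j \frac{d\kappa_j}{ds_j}$.

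\textbf{Main obstacle.} There is no deep obstacle here; the only points requiring care are bookkeeping ones. I would make sure the identification $d\Phi(\pa_{s_j}) = \alpha_j'(s_j)$ (viewed in the $j$-th $\C$-factor, zero elsewhere) is stated cleanly so the determinant factorizes, and I would be explicit that the frame $\{\pa_{s_j}\}$ is not merely orthonormal but parallel for $D$, since that is what kills all the connection terms and makes both \eqref{Hessproduct} and \eqref{Deltaproduct} fall out. The one genuine input from outside the immediate computation is formula \eqref{arg2}, $(\arg\alpha')' = |\alpha'|\kappa_\alpha$, which converts the derivative of the argument of the velocity into curvature; everything else is linear algebra in an orthonormal parallel frame.
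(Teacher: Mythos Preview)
Your proof is correct and follows exactly the approach the paper indicates: use flatness of the induced metric together with Equation~(\ref{arg2}) to reduce everything to coordinate-wise differentiation. The paper's own proof is a two-line sketch (``since the induced metric is flat, using Equation~(\ref{arg2}) it is straightforward to obtain (\ref{Hessproduct}) and hence (\ref{Deltaproduct})''), and your write-up simply spells out those straightforward steps in full.
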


\begin{figure}[ht] \begin{center} \includegraphics[scale=0.5]{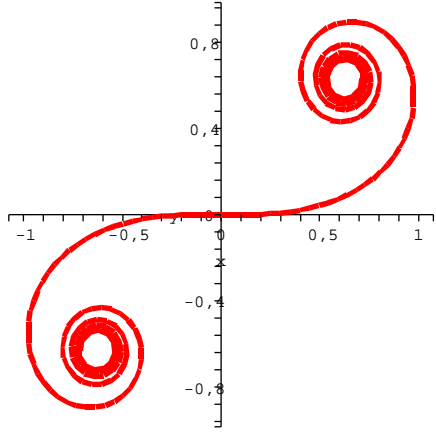}
   \caption{A Cornu spiral (cf Section 3) }
 \end{center}  \end{figure}
   
\begin{proof} Since the induced metric by the immersion $\Phi$ is flat,
using Equation (\ref{arg2}) it is straightforward  to obtain
(\ref{Hessproduct}) and hence (\ref{Deltaproduct}).
\end{proof}

Using (\ref{arg2}) again, we first observe that $\beta_\Phi$ is
constant, i.e.\ $\Phi$ is minimal, if and only if each curve
$\alpha_j$ is a line. In this case $\Phi $ is totally geodesic.

Moreover, $\Phi$ has parallel mean curvature if and only if
each curve $\al_j$ has constant curvature, i.e.\ is a straight line or a circle.
Finally,
$\Delta \beta_\Phi = 0$, i.e.\ $\Phi $ is H-minimal if
and only if the curvature of each curve $\alpha_j$ is a linear
function of its arclength parameter, $\kappa_j(s_j)=\lambda_j s_j
+ \mu_j $, and $\sum_{i=1}^n \lambda_j =0$. Such planar curves are
either curves of constant curvature (when $\lambda_j =0$) or  special
curves known in the literature as \em Cornu spirals \em or \em
clotoids \em (when $\lambda_j \neq 0$). A Cornu spiral $\alpha $
of parameter $\lambda$  can be parametrized, up to
congruences, by
 $$ \alpha (s) = \left( \int_0^{s} \cos(\lambda t^2 /2) dt ,\int_0^{s} \sin (\lambda t^2 / 2) dt  \right). $$
 The Cornu spirals are bounded but have infinite length. They have
 two ends that converges to two points of the plane (see Figure 1).
In conclusion:
\begin{corollary}\label{Cor:Ex0}
A product immersion of $n$ planar curves $\alpha_j$, $1 \leq j
\leq n$ have parallel mean curvature if and only if
each curve $\alpha_j$
is either a circle or  a straight line and
 H-minimal if and only if $(i)$ each curve $\alpha_j$
is either a circle, a line, or a Cornu spiral of parametre
$\lambda_j$, and $(ii)$  $\sum_{j=1}^n \lambda_j=0,$ where we set
$\lambda_j=0$ if $\al_j$ is a circle or a line.
\end{corollary}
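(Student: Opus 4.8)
The plan is to read off both equivalences directly from Proposition~\ref{product}, using the two facts recalled in the introduction: a Lagrangian immersion has parallel mean curvature vector if and only if $\nabla^{\perp}H\equiv0$, and it is H-minimal if and only if its Lagrangian angle is harmonic for the induced metric. For the parallel mean curvature statement I would argue as follows. Since $\Phi$ is Lagrangian, the parallel complex structure $J$ intertwines the Levi-Civita connection $D$ of the induced metric with the normal connection $\nabla^{\perp}$, so that $\nabla^{\perp}_X(J\nabla\beta_\Phi)=J\,D_X\nabla\beta_\Phi$ for every tangent $X$; combined with $nH=J\nabla\beta_\Phi$ this shows that $H$ is parallel if and only if the tangent field $\nabla\beta_\Phi$ is parallel for $D$. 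Because the induced metric is flat and $(\pa_{s_1},\dots,\pa_{s_n})$ is a global parallel orthonormal frame, this happens exactly when $D_{\pa_{s_j}}\nabla\beta_\Phi=0$ for all $j$, which by~(\ref{Hessproduct}) means $d\kappa_j/ds_j\equiv0$, i.e.\ each $\kappa_j$ is constant. By the fundamental theorem of plane curves a curve of constant curvature is an arc of a circle (nonzero curvature) or of a straight line (zero curvature); this proves the first assertion.

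For H-minimality, by~(\ref{Deltaproduct}) the condition $\Delta\beta_\Phi\equiv0$ reads
\[
\sum_{j=1}^{n}\frac{d\kappa_j}{ds_j}(s_j)\equiv0
\]
identically on the product of the parameter intervals. The one step that needs a moment's thought is a separation-of-variables argument: since each summand depends on its own variable only, fixing all variables but $s_j$ shows that $d\kappa_j/ds_j$ equals a constant $\la_j$; hence $\kappa_j(s_j)=\la_j s_j+\mu_j$ is affine in arclength, and substituting back into the displayed identity forces $\sum_{j=1}^{n}\la_j=0$. Conversely, any $n$-tuple of curves whose curvatures are affine in arclength with slopes summing to zero satisfies $\Delta\beta_\Phi\equiv0$ by~(\ref{Deltaproduct}), hence is H-minimal.

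It remains to identify the plane curves with affine curvature $\kappa_j(s)=\la_j s+\mu_j$. If $\la_j=0$ we are back to constant curvature, i.e.\ a circle or a line. If $\la_j\neq0$, a shift of the arclength parameter absorbs $\mu_j$, and integrating the Frenet equations with $\kappa_j(s)=\la_j s$ gives, up to a rigid motion of the plane, the parametrization
\[
\alpha_j(s)=\left(\int_0^{s}\cos(\la_j t^2/2)\,dt,\ \int_0^{s}\sin(\la_j t^2/2)\,dt\right),
\]
which is precisely the Cornu spiral of parameter $\la_j$ discussed above. With the convention $\la_j=0$ for a circle or a line, conditions $(i)$ and $(ii)$ follow, and the proof is complete. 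No step is a serious obstacle here: everything is a direct substitution into Proposition~\ref{product} or an elementary ODE fact, the only genuinely substantive ingredients being the elementary separation of variables and the classification of plane curves by their curvature function.
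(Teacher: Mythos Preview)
Your argument is correct and follows essentially the same route as the paper: the text surrounding Corollary~\ref{Cor:Ex0} deduces the parallel mean curvature condition from (\ref{Hessproduct}) and the H-minimality condition from (\ref{Deltaproduct}) via the same separation-of-variables step, then identifies the resulting affine-curvature plane curves as lines, circles, or Cornu spirals. You have simply made explicit the (standard) reduction of $\nabla^\perp H=0$ to $D\nabla\beta_\Phi=0$ and spelled out the separation argument, which the paper leaves implicit.
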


\begin{remark} \em We can generalize this construction by considering the product of $n$ Lagrangian immersions into
$\C^{m_j},$ $1 \leq j \leq n,$ resulting in a Lagrangian immersion into $\C^N,$ where $N=\sum_{j=1}^n m_j .$ In the two
following sections, we shall consider more elaborate constructions. \em
\end{remark}

\section{Construction with a planar curve and a Legendrian immersion}
\label{s:Ex1}
\subsection{The construction}
We  now give a precise description of the geometry of a family of
Lagrangian submanifolds in $\C^n$ which was first introduced in
\cite{RU}:
\begin{theorem}\label{Th:Ex1}
Let $\alpha:I\rightarrow\C^*$ be a regular curve, with arclength
parametre $s$   and $\psi$ a
Legendrian immersion of an orientable manifold $N$ into $\S^{2n-1}.$
 Then the map
$$ \begin{array}{lccc} \Phi :
 &  I \times N &  \longrightarrow & \C^{n}              \\
&  (s,x) & \longmapsto & \al(s) \psi(x),
   \end{array}$$
is a Lagrangian immersion in $\C^n$ with induced metric
\[
 \bar{g} = ds^2 + |\alpha|^2 g,
\]
where $g$ is the induced metric on $N$, and Lagrangian angle map
\begin{equation} \label{Beta1}
\beta_\Phi(s,x) =G_{\!\al}(s)+ \beta_\psi(x) ,
\end{equation}
where $G_{\!\al}:= \arg \alpha' + (n-1) \arg \alpha$ and $\beta_\psi$
is the Legendrian angle map of $\psi$. In addition, the gradient of $\beta_\Phi$ is given by the formula:
\begin{equation}\label{Nabla1}
\nabla_{\!\bar{g}} \beta_\Phi=\left(G_{\!\al}' \partial_s,
\frac{1}{|\alpha|^{2}} \nabla_{\!g} \beta_\psi \right).
\end{equation}
Moreover, denoting by $\bar{D}$ (resp. by $D$) the Levi-Civita connection of $\bar{g}$ (resp. of $g$), we have
\begin{equation}\label{Hess11}
\bar{D}_{(\pa_s,0)} \nabla_{\!\bar{g}} \beta_\Phi =
  \left( G''_{\!\al} \pa_s, - \frac{\<\al',\al \>}{|\al|^4} \nabla_{\!g} \beta_\psi \right)
\end{equation}
and
\begin{equation}\label{Hess21}
\bar{D}_{(0,X)} \nabla_{\!\bar{g}} \beta_\Phi =
  \left( - \frac{\<\al',\al \>}{|\al|^2} d\beta_\psi(X) \pa_s,
   \frac{1}{|\al|^2} D_{X} \nabla_{\!g} \beta_\psi+ G_{\!\al}' \frac{\<\al',\al \>}{|\al|^2} X \right),
\end{equation}
where $X$ is a vector field on $N.$

Finally, the Laplacian of $\beta_\Phi$ is given by the formula:
\begin{equation}\label{Delta1}
\Delta_{\bar{g}} \beta_\Phi = \frac{1}{|\alpha|^{n-1}}
\frac{d}{ds} \left(|\al|^{n-1} G'_{\!\al} \right)
    + \frac{1}{|\alpha|^2}\Delta_g \beta_\psi.
\end{equation}
\end{theorem}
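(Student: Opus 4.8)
The plan is to set up the product structure $\Phi(s,x) = \alpha(s)\psi(x)$ explicitly, compute the tangent spaces, derive the induced metric, and then extract the Lagrangian angle and its derivatives by a sequence of differentiations.

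First I would compute the differential of $\Phi$. We have $\Phi_s = \alpha'(s)\psi(x)$ and, for a tangent vector $X$ on $N$, $d\Phi(0,X) = \alpha(s)\, d\psi(X)$. Using the relations $|\psi|^2 = 1$, $\langle d\psi(X), \psi\rangle = 0$ (since $\psi$ maps into the sphere) and $|\alpha'| = 1$ (arclength), one checks $\langle \Phi_s,\Phi_s\rangle = 1$, $\langle \Phi_s, d\Phi(0,X)\rangle = \Re\big(\langle \alpha'\psi, \alpha\, d\psi(X)\rangle\big)$; here the Legendrian condition $\psi^*\Lambda \equiv 0$, i.e. $\langle d\psi(X), J\psi\rangle = 0$, together with $\langle \alpha',\alpha\rangle$ being real times $\langle\psi, d\psi(X)\rangle = 0$ and the imaginary part killed by Legendrian-ness, gives orthogonality; and $\langle d\Phi(0,X), d\Phi(0,Y)\rangle = |\alpha|^2 g(X,Y)$. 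This yields $\bar g = ds^2 + |\alpha|^2 g$ and simultaneously the Lagrangian condition $\omega = 0$ by an analogous imaginary-part computation (all cross terms involve $\langle d\psi(X), J\psi\rangle = 0$ or $\Im\langle\psi,\psi\rangle = 0$).

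Next, the Lagrangian angle. Choosing an oriented orthonormal frame $\{e_1,\dots,e_{n-1}\}$ on $N$, the frame $\{\alpha^{-1}\Phi_s\} \cup \{|\alpha|^{-1} d\Phi(0,e_i)\}$ is... actually I would work with $\{\Phi_s, d\Phi(0,e_1),\dots,d\Phi(0,e_{n-1})\}$ and divide out the normalizing scalars at the end. Then $dz_1\wedge\dots\wedge dz_n(\Phi_s, d\Phi(0,e_i)) = \det_\C\{\alpha'\psi, \alpha\, d\psi(e_1),\dots\} = \alpha'(s)\,\alpha(s)^{n-1}\det_\C\{\psi, d\psi(e_1),\dots,d\psi(e_{n-1})\} = \alpha'\,\alpha^{n-1} e^{i\beta_\psi}$, using the definition of $\Omega$ and $\beta_\psi$. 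Taking arguments gives $\beta_\Phi = \arg\alpha' + (n-1)\arg\alpha + \beta_\psi = G_\al(s) + \beta_\psi(x)$, proving (\ref{Beta1}).

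For (\ref{Nabla1}) I just raise indices with $\bar g = ds^2 + |\alpha|^2 g$: the $s$-component of the gradient is $\partial_s\beta_\Phi = G_\al'$, the $N$-component is $|\alpha|^{-2}\nabla_g\beta_\psi$. For the Hessian formulas (\ref{Hess11}) and (\ref{Hess21}), the main task is to compute the Levi-Civita connection $\bar D$ of the warped-type metric $ds^2 + |\alpha|^2 g$; this is the warped-product (or rather conformally-warped, since $|\alpha|$ depends on $s$) connection, and the relevant Christoffel symbols are $\bar D_{\partial_s}\partial_s = 0$, $\bar D_{\partial_s} X = \bar D_X \partial_s = \frac{(\log|\alpha|)'}{1} X = \frac{\langle\alpha',\alpha\rangle}{|\alpha|^2} X$ (using $\tfrac{d}{ds}|\alpha|^2 = 2\langle\alpha',\alpha\rangle$), and $\bar D_X Y = D_X Y - |\alpha|\,|\alpha|'\, g(X,Y)\,\partial_s = D_X Y - \langle\alpha',\alpha\rangle\, g(X,Y)\,\partial_s$. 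Plugging the gradient from (\ref{Nabla1}) into $\bar D_{(\partial_s,0)}$ and $\bar D_{(0,X)}$ and using these Christoffel symbols together with $\tfrac{d}{ds}\big(|\alpha|^{-2}\big) = -2\langle\alpha',\alpha\rangle/|\alpha|^4$ yields (\ref{Hess11}) and (\ref{Hess21}) after collecting terms. Finally (\ref{Delta1}) is the trace: $\Delta_{\bar g}\beta_\Phi = \bar g^{ss}\,\langle\bar D_{\partial_s}\nabla\beta_\Phi,\partial_s\rangle + |\alpha|^{-2} g^{ij}\langle\bar D_{e_i}\nabla\beta_\Phi, e_j\rangle$; the $s$-term gives $G_\al'' + (n-1)\frac{\langle\alpha',\alpha\rangle}{|\alpha|^2} G_\al'$ which is exactly $|\alpha|^{-(n-1)}\tfrac{d}{ds}(|\alpha|^{n-1}G_\al')$, and the $N$-part gives $|\alpha|^{-2}\Delta_g\beta_\psi$ (the $G_\al'\frac{\langle\alpha',\alpha\rangle}{|\alpha|^2}X$ term in (\ref{Hess21}) contributes the cross term $(n-1)\frac{\langle\alpha',\alpha\rangle}{|\alpha|^2}G_\al'$ to the $s$-coefficient of the trace — one must be careful that tracing the $N$-block of (\ref{Hess21}) picks up $\langle X, e_j\rangle$ summed, giving $(n-1)$).

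I expect the main obstacle to be bookkeeping rather than conceptual: correctly identifying the Levi-Civita connection of the $s$-dependent conformally-warped metric $ds^2 + |\alpha(s)|^2 g$ and tracking which terms land in which component, especially the mixed term in (\ref{Hess21}) where $\bar D_X(\nabla\beta_\Phi)$ produces both an $\partial_s$-part (from $\bar D_X$ applied to the $N$-component via the $-\langle\alpha',\alpha\rangle g(\cdot,\cdot)\partial_s$ Christoffel symbol and from the $s$-component's derivative) and an $N$-part. A secondary subtlety is ensuring all the vanishing cross terms in the metric and symplectic-form computations really do vanish — this is precisely where the Legendrian hypothesis $\psi^*\Lambda \equiv 0$ (equivalently $\langle d\psi(X), J\psi\rangle = 0$) is essential and must be invoked explicitly.
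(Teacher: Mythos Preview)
Your proposal is correct and follows essentially the same route as the paper: compute the first derivatives of $\Phi$, use the Legendrian condition $(\psi,d\psi(X))=0$ to read off both the warped metric $\bar g = ds^2 + |\alpha|^2 g$ and the Lagrangian property, factor $\alpha'\alpha^{n-1}$ out of the complex determinant to get $\beta_\Phi = G_\alpha + \beta_\psi$, then compute the warped-product Christoffel symbols $\bar D_{\partial_s}\partial_s=0$, $\bar D_{\partial_s}X=\bar D_X\partial_s=\frac{\langle\alpha',\alpha\rangle}{|\alpha|^2}X$, $\bar D_X Y = D_X Y - \langle\alpha',\alpha\rangle g(X,Y)\partial_s$, and trace. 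The only cosmetic difference is that the paper carries the argument in local coordinates $(x_0,\dots,x_{n-1})$ on $I\times N$ whereas you phrase it in terms of a general tangent field $X$; the computations are otherwise identical, including the identity $|\alpha|^{-(n-1)}\frac{d}{ds}(|\alpha|^{n-1}G_\alpha') = G_\alpha'' + (n-1)\frac{\langle\alpha',\alpha\rangle}{|\alpha|^2}G_\alpha'$ used to package the Laplacian.
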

\begin{proof}

We consider  local coordinates $(x_1, ... , x_{n-1})$ on $N$,
so that we get local coordinates  $(x_0=s, x_1, ... , x_{n-1})$ on
$ I \times N.$ We calculate the first derivatives of the
immersion:

$$ \frac{\pa \Phi}{\pa x_0}= \al' \psi, \quad \quad
 \frac{\pa \Phi}{\pa x_j}= \al \frac{\pa \psi}{\pa x_j}, \quad 1 \leq j \leq n-1.$$
Calculating the Hermitian products of pairs of such first
derivatives and using the Legendrian property of $\psi$ shows that
the immersion $\Phi$ is Lagrangian. Moreover, denoting by
$(g_{jk})_{1 \leq j,k \leq n-1}$ the coefficients of the metric
$g$, it follows that the coefficients of the metric $\bar{g}$ are
given by
 $$\bar{g}_{jk}= |\al|^2 g_{jk} \quad \bar{g}_{0j}=0 \quad  \bar{g}_{00}=1, $$
and the inverse matrix of $\bar g$ by
 $$\bar{g}^{jk}= |\al|^{-2} g^{jk} \quad \bar{g}^{0j}=0 \quad  \bar{g}^{00}=1. $$
In particular we obtain the formula
$$ \bar{g} = ds^2 + |\alpha|^2 g.$$
Next we compute the Lagrangian angle of $\Phi$:
$$\beta_\Phi=\arg \det_{\C}
\left( \frac{\pa \Phi}{\pa x_0}, ...,  \frac{\pa \Phi}{\pa
x_{n-1}} \right)$$
$$ = \arg \left( \al' \al^{n-1} \det_{\C} \left(\psi,  \frac{\pa \psi}{\pa
x_{1}},..., \frac{\pa \psi}{\pa x_{n-1}} \right) \right) $$
$$= \arg
\al' + (n-1) \arg \al + \beta_{\psi}.$$

We deduce the following expression of the gradient of
$\beta_\Phi$:
$$\nabla_{\!\bar{g}} \beta_\Phi= \sum_{a,b=0}^{n-1}
   \bar{g}^{ab}
               \frac{\pa \beta_\Phi}{\pa x_b}  \pa_{x_a}=\bar{g}^{00}
                G'_{\!\al}  (\pa_s,0)
               +\frac{1}{|\alpha|^{2}} \sum_{j,k=1}^{n-1}
   {g}^{jk}     \frac{\pa \beta_\psi}{\pa x_k}  (0,\pa_{x_j})$$
$$ =G'_{\!\al}(\pa_s,0)+  \frac{1}{|\al|^{2}}(0,\nabla_{\!g} \beta_{\psi}).$$

Next, a straightforward computation gives
$$\bar{D}_{\pa_s} \pa_s = 0   \quad \quad
\bar{D}_{\pa_{x_j}} \pa_s =\bar{D}_{\pa_s} \pa_{x_j} =\frac{\<\al',\al \>}{|\al|^2} \pa_{x_j}$$
and
$$\bar{D}_{\pa_{x_j}} \pa_{x_k} =-\<\al',\al\> g_{jk}\pa_s + D_{\pa_{x_j}} \pa_{x_k}.$$
It follows that
$$ \bar{D}_{\pa_s} \nabla_{\!\bar{g}} \beta_\Phi =
  \left(G_{\!\al}'' \pa_s, 0 \right)$$
  and
  $$ \bar{D}_{\pa_{x_j}} \nabla_{\!\bar{g}} \beta_\Phi =
  \left( - \frac{\<\al',\al \>}{|\al|^2}\frac{\pa \beta_\psi}{\pa x_j} \pa_s,
   \frac{1}{|\al|^2} D_{\pa_{x_j}} \nabla_{\!g} \beta_\psi+ G_{\!\al}' \frac{\<\al',\al \>}{|\al|^2} \pa_{x_j} \right).$$
  This implies Equation (\ref{Hess21}) and
it remains to compute the Laplacian of $\beta_\Phi$:
$$\Delta_{\bar{g}} \beta_\Phi =\sum_{j,k=0}^{n-1} \bar{g} (\bar{D}_{\pa_{x_j}} \nabla_{\!\bar{g}} \beta_\Phi, \pa_{x_k})\bar{g}^{jk}$$
$$ = G_{\!\al}'' + \sum_{j,k=1}^{n-1} g \left( \frac{1}{|\al|^2} D_{\pa_{x_j}} \nabla_{\!g} \beta_\psi, \pa_{x_k} \right)g^{jk}
+ \sum_{j,k=1}^{n-1} G_{\!\al}' \frac{\<\al',\al \>}{|\al|^2} g(\pa_{x_j},\pa_{x_k})g^{jk}$$
$$= G_{\!\al}''  + \frac{1}{|\al|^2}\Delta_g \beta_\psi+ (n-1)G_{\!\al}' \frac{\<\al',\al \>}{|\al|^2}.$$
Finally, we observe that
$$\frac{1}{|\alpha|^{n-1}}
\frac{d}{ds} \left(|\al|^{n-1} G_{\!\al}' \right)= G_{\!\al}''+ (n-1)G_{\!\al}' \frac{\<\al',\al \>}{|\al|^2}$$
and the proof is complete. \end{proof}

\begin{remark}
{\rm If $\alpha $ is a straight line passing through the origin,
the immersion becomes $\Phi : \R \times N \longrightarrow \C^{n}
$, $(s,x) \to s\psi(x)$. Hence we obtain the {\em cone} $ C(\psi)$
with link $\psi $. It is clear that $C(\psi)$ is a Lagrangian
immersion with a singularity at $s=0$. In the general case $\Phi$
has singularities at the points $(s,x)\in I\times N$ where
$\alpha$ vanishes. Since in the case of a cone
$\beta_{C(\psi)}=\beta_\psi$, we deduce $C(\psi )$ is minimal
(resp.\ H-minimal) if and only if $\psi$ is minimal (resp.\
C-minimal); this result was used in \cite{H1} and \cite{H2}.

If we take $\alpha(s)=e^{is}$,
Theorem \ref{Th:Ex1} gives a family of submanifolds which
  generalizes to higher dimension  the Hopf cylinders
  considered in \cite{P}.

If $\psi$ is chosen to be the totally geodesic Legendrian
embedding
 $\psi (x)=x$ of $\S^{n-1}$ into $\S^{2n-1}$,
 we are in the case of the Lagrangian submanifolds
 which are invariant under the standard action of $SO(n)$ on $\C^n$ (see \cite{HL}, \cite{ACR}).}
\end{remark}

The quantity $A_\al :=|\al|^{n-1}G'_{\!\al}$ appearing in the
expressions of the gradient and the Laplacian of $\beta_\Phi$ enjoys
a geometric interpretation. In fact, a simple calculation using
Equation (\ref{arg2}) yields
$$A_\al=|\alpha|^{2n-2}\kappa_{\alpha^n}.$$
where $\kappa_{\alpha^n}$ is the curvature of the planar curve $\al^n.$
From Equation (\ref{Nabla1}) and the formula $nH=J \nabla \beta,$ it
follows that the immersion $\Phi $ constructed in Theorem
\ref{Th:Ex1} is minimal if and only if $\psi$ is minimal and
$\alpha^n$ has curvature zero. This fact has been used in \cite{CU2},
\cite{H1} or \cite{J1}. If $\alpha $ is not a straight line, it can be
parametrized by $\alpha_c(t)=\sqrt[2n]{t^2+c^2}\,
e^{i\frac{\arctan t}{n \, c}}$ and the corresponding minimal
Lagrangian submanifolds were constructed  in \cite{CU2}, (Remark 1), \cite{H1},
(Theorem A) and \cite{J1} (Theorem 6.4). We refer to \cite{CU3} for the
description of some other examples of minimal Lagrangian
submanifolds in $\C^n$ using this method.

Next we give a characterization of those immersions described in
 Theorem \ref{Th:Ex1}
which have parallel mean curvature vector or are compact and H-minimal:

\begin{corollary}\label{Cor:Ex1}
The Lagrangian immersion
$$ \begin{array}{lccc} \Phi :
 &  I \times N &\to& \C^{n}              \\
&  (s,x) & \mapsto & \al(s) \psi(x),
   \end{array}$$
 where $\al$ is a planar curve which does not vanish  and $\psi$ is a
Legendrian immersion of an orientable manifold $N$ into $\S^{2n-1}$,
 has parallel mean curvature vector if and only if
\begin{itemize}
    \item[-] either $(i)$ it is minimal; in this case the curve $\al$ is such that the quantity
$A_\al:=|\alpha|^{2n-2}\kappa_{\alpha^n},$ where $\kappa_{\alpha^n}$ is the curvature of $\al^n$,
vanishes and the immersion
$\psi$ is minimal;
\item[-] or $(ii)$ the curve $\al$ is a circle centered at the origin
and the immersion $\psi$ has parallel mean curvature vector.
\end{itemize}
 Moreover, if $N$ is compact, the immersion
$\Phi$ is H-minimal if and only if the curve $\alpha$ is such that 
$A_\al$  is
constant and the immersion $\psi$ is  C-minimal.
\end{corollary}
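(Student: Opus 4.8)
The plan is to exploit the three formulas established in Theorem \ref{Th:Ex1} — the gradient (\ref{Nabla1}), the Hessian formulas (\ref{Hess11})--(\ref{Hess21}), and the Laplacian (\ref{Delta1}) — together with the observation that $\Phi$ has parallel mean curvature vector precisely when $\bar D \nabla_{\!\bar g}\beta_\Phi \equiv 0$ (since $nH = J\nabla\beta$ and $J$ is parallel), and is H-minimal precisely when $\Delta_{\bar g}\beta_\Phi \equiv 0$. So the corollary is a matter of reading off when these tensorial/scalar quantities vanish.

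For the parallel mean curvature statement, I would set both (\ref{Hess11}) and (\ref{Hess21}) to zero. From the $\partial_s$-component of (\ref{Hess11}) we get $G''_{\!\al}\equiv 0$; from its $N$-component we get $\langle\al',\al\rangle\,\nabla_{\!g}\beta_\psi \equiv 0$, so either $\beta_\psi$ is constant (i.e.\ $\psi$ is minimal) or $\langle \al',\al\rangle \equiv 0$, which means $|\al|$ is constant, i.e.\ $\al$ lies on a circle centered at the origin. Case $(i)$: if $\psi$ is minimal, then (\ref{Hess21}) reduces to the requirement $G'_{\!\al}\langle\al',\al\rangle X \equiv 0$; combined with $G''_{\!\al}=0$ this forces $A_\al = |\al|^{n-1}G'_{\!\al}$ constant, and then one checks (using $A_\al = |\al|^{2n-2}\kappa_{\al^n}$ and the structure of (\ref{Nabla1})) that actually $\nabla\beta_\Phi\equiv 0$, i.e.\ $\Phi$ is minimal, is the only possibility consistent with \emph{parallel} (not merely harmonic) $\beta_\Phi$ — here I would need to argue that a nonzero constant $A_\al$ still produces a nonparallel gradient because the $\partial_s$-direction coefficient $G'_{\!\al}=A_\al/|\al|^{n-1}$ is nonconstant unless $|\al|$ is, contradiction. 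Case $(ii)$: if instead $|\al|\equiv r$ is constant, then $G'_{\!\al} = (\arg\al')' + (n-1)(\arg\al)'$; using (\ref{arg2}) and (\ref{arg1}) one sees $G''_{\!\al}=0$ forces $\al$ to have constant curvature on a fixed-radius circle, hence $\al$ is (an arc of) that circle, and (\ref{Hess21}) then reduces to $\frac{1}{r^2}D_X\nabla_{\!g}\beta_\psi \equiv 0$, i.e.\ $\psi$ has parallel mean curvature vector. Conversely, in either listed case all the Hessian terms vanish, giving parallel mean curvature.

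For the H-minimal-with-$N$-compact statement, I would start from (\ref{Delta1}): $\Delta_{\bar g}\beta_\Phi = |\al|^{-(n-1)}\frac{d}{ds}(|\al|^{n-1}G'_{\!\al}) + |\al|^{-2}\Delta_g\beta_\psi = |\al|^{-(n-1)}A'_\al + |\al|^{-2}\Delta_g\beta_\psi$. The key point is a separation-of-variables argument: $A'_\al$ depends only on $s$ while $\Delta_g\beta_\psi$ depends only on $x\in N$, so $\Delta_{\bar g}\beta_\Phi\equiv 0$ is equivalent to $|\al|^{2}|\al|^{-(n-1)}A'_\al = -\Delta_g\beta_\psi$ being simultaneously a function of $s$ only and of $x$ only, hence a constant $c$. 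Then $\Delta_g\beta_\psi \equiv -c$ on the compact manifold $N$; integrating over $N$ (the Laplacian of a function integrates to zero on a closed manifold, by the divergence theorem) forces $c=0$. Therefore $\Delta_g\beta_\psi\equiv 0$, i.e.\ $\psi$ is C-minimal, and $A'_\al\equiv 0$, i.e.\ $A_\al$ is constant. The converse is immediate from (\ref{Delta1}). I would remark that compactness of $N$ is exactly what is used to kill the constant $c$; without it one could balance a nonzero $A'_\al$ against a nonzero constant eigenvalue-type term.

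The main obstacle I anticipate is the parallel mean curvature case $(i)$: showing that a \emph{nonzero constant} value of $A_\al$ is incompatible with $\bar D\nabla_{\!\bar g}\beta_\Phi\equiv 0$, so that ``$\psi$ minimal'' actually forces ``$\Phi$ minimal'' rather than merely ``$\Phi$ H-minimal with constant $A_\al$''. This requires carefully tracking the $(\pa_s,0)$ versus $(0,X)$ components in (\ref{Hess11})--(\ref{Hess21}) when $\nabla_{\!g}\beta_\psi\equiv 0$: the surviving term is $G'_{\!\al}\frac{\langle\al',\al\rangle}{|\al|^2}X$, whose vanishing for all $X$ gives $G'_{\!\al}\langle\al',\al\rangle\equiv 0$; either $\langle\al',\al\rangle\equiv 0$ (constant radius, putting us in case $(ii)$ overlap or degenerating $A_\al$ to something forcing a circle/line), or $G'_{\!\al}\equiv 0$ which with $A_\al=|\al|^{n-1}G'_{\!\al}$ gives $A_\al\equiv 0$, i.e.\ $\kappa_{\al^n}\equiv 0$, i.e.\ minimality. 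Everything else is bookkeeping with the already-proven formulas.
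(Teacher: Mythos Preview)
Your approach is correct and essentially identical to the paper's, which is very terse: it simply says the parallel mean curvature claim ``follows easily from Equations (\ref{Hess11}) and (\ref{Hess21})'' and proves the H-minimal claim by exactly your separation-of-variables argument on (\ref{Delta1}) followed by the compactness observation $\int_N \Delta_g\beta_\psi = 0$. One expository wobble worth tightening: in your middle paragraph you call the situation ``$\psi$ minimal with $A_\al$ a nonzero constant'' a \emph{contradiction}, but it is not --- as your own final paragraph correctly works out, $G'_\al\langle\al',\al\rangle\equiv 0$ together with $G'_\al$ a nonzero constant forces $|\al|$ constant, which simply places you in case $(ii)$ (with $\psi$ minimal, hence trivially of parallel mean curvature), not in a contradiction.
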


\begin{proof}
The first claim follows easily from Equations (\ref{Hess11}) and (\ref{Hess21}). To prove the second claim, we observe that if 
 $\Phi$ is H-minimal,
i.e. $\Delta_{\bar{g}} \beta_\Phi$ vanishes, then
$\Delta_g \beta_\psi$ must be constant by Equation (\ref{Delta1}). Moreover, if $N$ is compact,
  $\int_N \Delta_g \beta_\psi$ vanishes so 
this constant must be zero (so in particular $\psi$ is C-minimal). Thus, by Equation (\ref{Delta1}) again, $A_\al$ must be constant.
\end{proof}

In the next two sections we shall describe in greater detail
respectively the planar curves $\al$ and the Legendrian immersions
$\psi$ satisfying the conditions given in Corollary \ref{Cor:Ex1} to get
 H-minimal Lagrangian submanifolds in
$\C^n$.

\subsection{Planar curves satisfying
$A_\al=|\alpha|^{2n-2}\kappa_{\alpha^n}$ is a non null constant} The
simplest  curves verifying this condition are the circles centered
at the origin, $\alpha (s)=R e^{is/R}$, $R>0$. By Corollary 2, it
provides examples of Lagrangian H-minimal immersions of $\S^1
\times N^{n-1}$ in $\C^n$.
 In particular,  taking
  $\psi (x)=x$, $x\in \S^{2n-1},$ we recover Example 2.10 in \cite{JLS}.

Since the curvature of a curve is changed under a scaling according
to the law
$\kappa_{\lambda\alpha}=\kappa_\alpha / \lambda $,
we deduce that $A_{\lambda \alpha}=\lambda^{n-2} A_\alpha$. Thus, when
$n=2$ this quantity $|\alpha|^{2}\kappa_{\alpha^2}$ is invariant under dilations; on the contrary, when $n>2$
we can normalize $A_\alpha =1$ by rescaling the curve. A qualitative study
of the curves $\alpha$ solutions of
$|\alpha|^{2n-2}\kappa_{\alpha^n}=constant$ has been done for $n=2$
in \cite{AR}, Section 4.1 p.\ 15  and the case  $n >2$ has been treated
in \cite{ACR}, Section 5, p.\ 1204. In both cases the purpose was the
classification of H-minimal Lagrangian submanifolds in $\C^n$
foliated by $(n-1)$-dimensional spheres.
We give here a brief description of these curves (see Figures 2,3,4 and 5):

\begin{figure}[h] \begin{center} \includegraphics[scale=0.5]{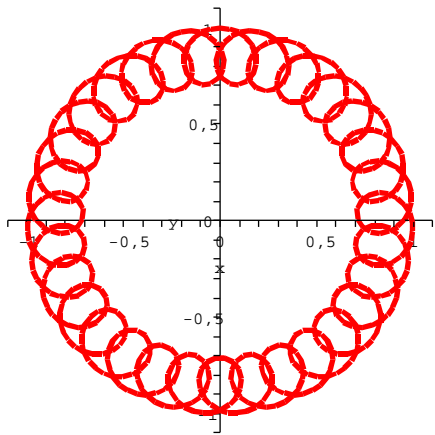}
   \caption{A closed curve $\al_p$ with $A_\al=5$, $n=2$}
\end{center}  \end{figure}

\begin{figure}[h] \begin{center} \includegraphics[scale=0.5]{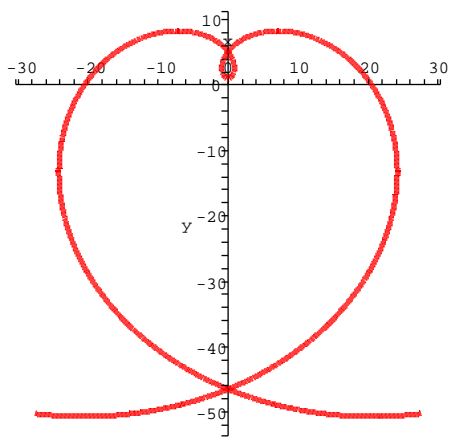}
   \caption{An unbounded curve with $A_\al=1$, $n=2$}
\end{center}  \end{figure}

\noindent {\bf Case $n=2$.} Beyond the circles centered at the
origin, the solutions of the equation
$|\alpha|^{2}\kappa_{\alpha^2} = constant$ belong to one of the
following families:

\begin{itemize}
\item[-]  A two-parameter family of non-embedded
curves, including a countable family of closed curves that we shall denote by $\al_p, p \in \Bbb N.$ The other
ones are not properly embedded;
\item[-]
A one-parameter family of unbounded, non-embedded curves.
\end{itemize}

\begin{figure} \begin{center} \includegraphics[scale=0.5]{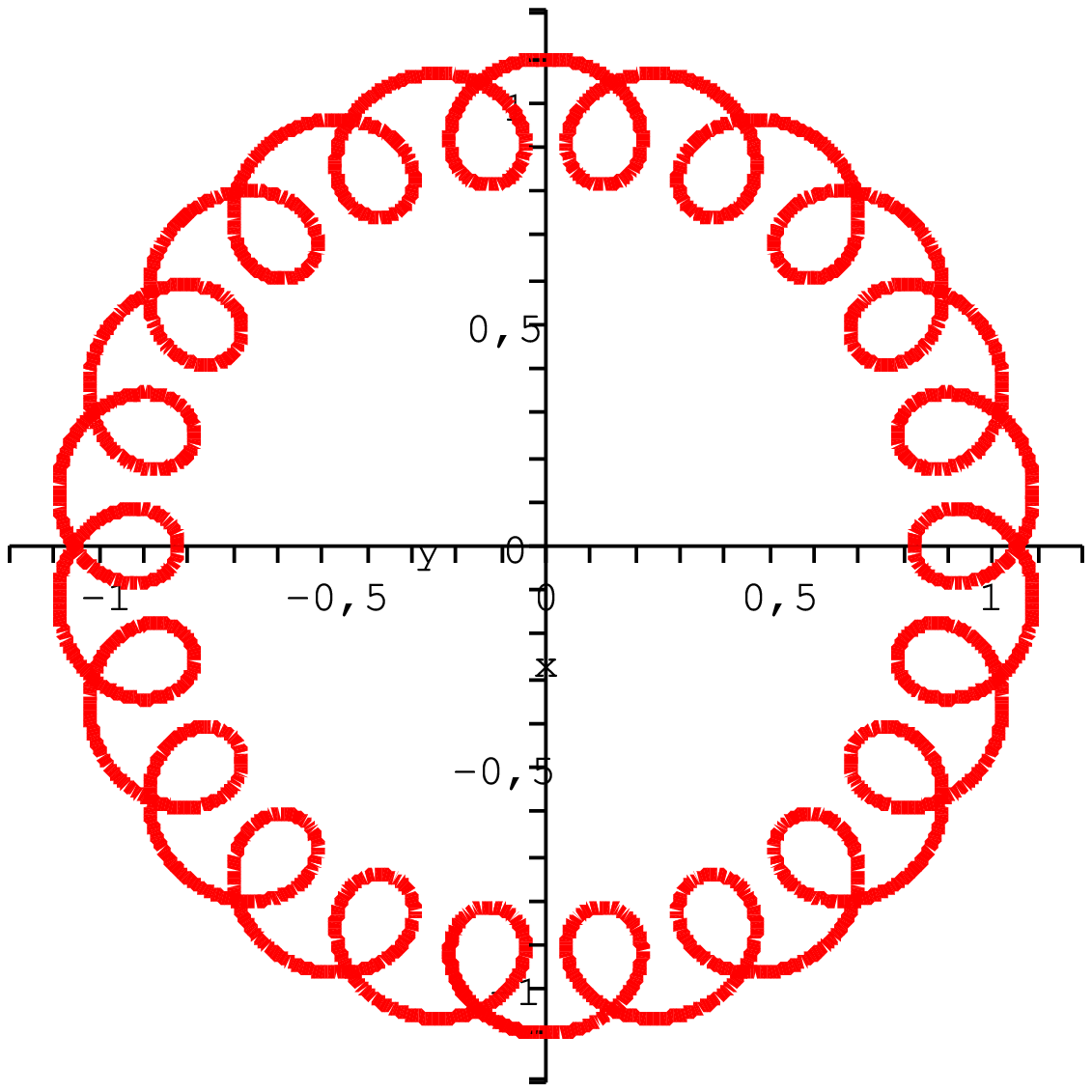}
   \caption{A closed curve $\al_p$ with  $n=3$}
\end{center}  \end{figure}

\begin{figure} \begin{center} \includegraphics[scale=0.5]{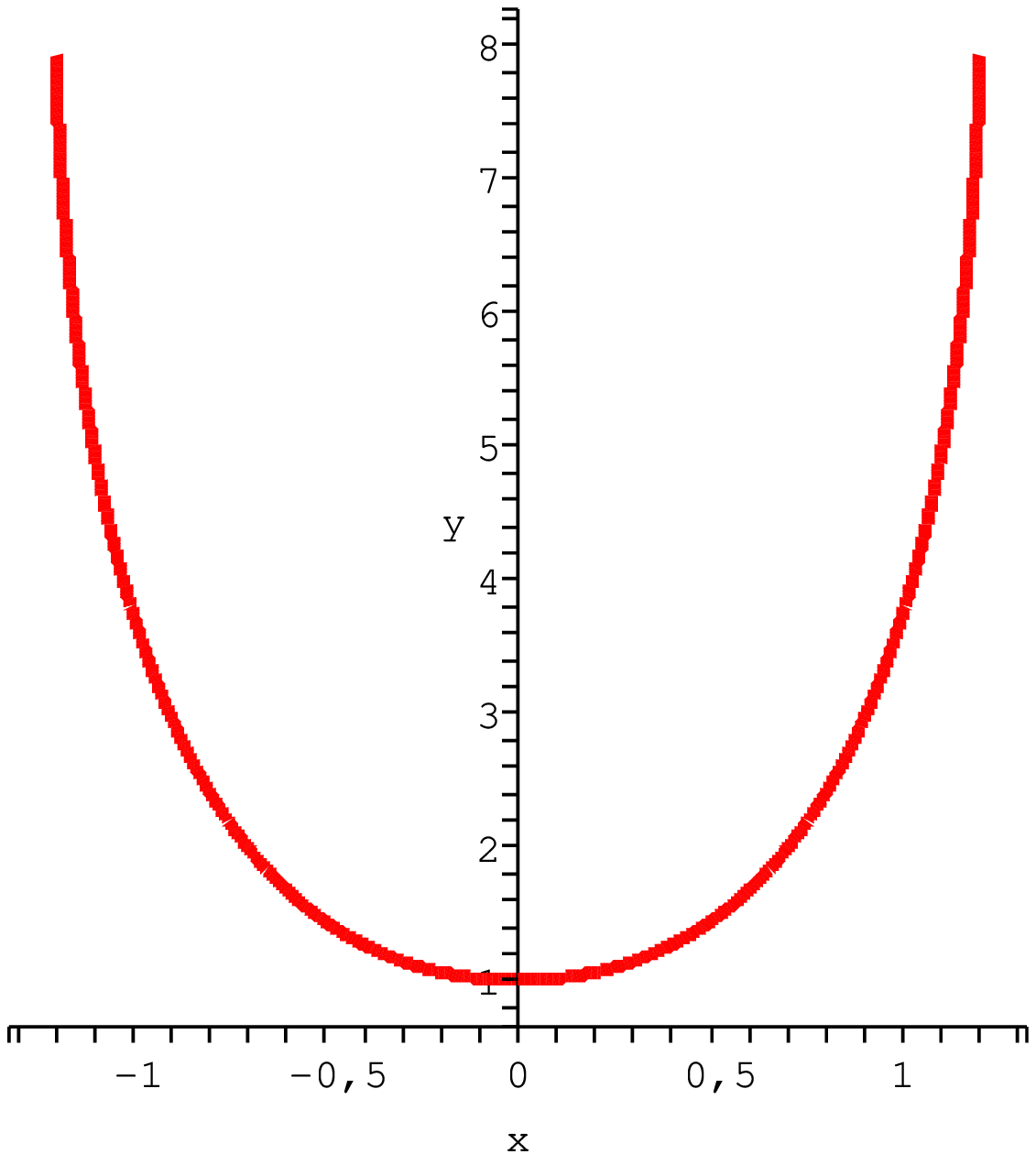}
   \caption{An unbounded curve with $n=3$}
\end{center}  \end{figure}

\noindent {\bf Case $n>2$.} Beyond the circles centered at the
origin, the solutions of the equation
$|\alpha|^{2n-2}\kappa_{\alpha^n}=constant$
 belong to one of the
following families:
\begin{itemize}
\item[-] A one-parameter family of non-embedded
curves, including a countable family of closed curves that we shall denote by $\al_p, p \in \Bbb N.$ The other
ones are not properly embedded;
\item[-] A one-parameter family of unbounded curves, some of them
being embedded and other ones not;
\item[-] A
one-parameter family of  curves which are not properly embedded,
with a spiraling end asymptotic to a circle centered at the
origin;
\item[-] A
one-parameter family of  curves with two spiraling ends asymptotic
to a circle centered at the origin;
\item[-]
Moreover, in the case of dimension $n=3,$ the straight lines (not
passing through the origin) are solutions.
\end{itemize}

\subsection{C-minimal Legendrian submanifolds in odd dimensional spheres}
We now describe several examples of C-minimal Legendrian immersions
into $\S^{2n-1}$, making special emphasis in the
compact case. We distinguish several cases according to the
values of $n$.

\subsubsection{\bf Case $n=2$.}
In this case, the Laplacian operator on a Legendrian curve $\gamma$ in $\S^3$
is simply the second derivative with respect to the arclength parameter $s$.
 Since $\frac{d\beta_\gamma}{ds} = k_\gamma$,
we deduce $\gamma$ is C-minimal if and only if it has constant curvature $k_\gamma=c.$

Such curves take the following explicit form (see \cite{CCh}):
$$
\gamma(s)=e^{i(c+\sqrt{c^2+4})s/2}A_1+e^{i(c-\sqrt{c^2+4})s/2}B_1,
$$
for suitable constants $A_1,B_1\in {\C}^2$. Since
$\Phi(t,s)=\alpha(t)\gamma(s)$, up to rotations it is enough to
consider the Legendrian curves parametrized by
\begin{equation}\label{levparalel}
\gamma_\varphi(s)= \left(\cos \varphi \, e^{i \tan \varphi \, s},
\sin \varphi \, e^{-i \cot \varphi \, s}\right), \  \varphi \in
(0,\pi/2),
\end{equation}
where $\pi/2 - 2\varphi $ is the latitude of the parallel $\pi
\circ \gamma_\varphi $. It is straightforward to check that the
Legendrian angle of $\gamma_{\varphi}$ is 
$$\beta_{\gamma_{\varphi}}=-\pi/2
+(\tan \varphi -\cot \varphi )s,$$
 hence it has constant curvature $c=\tan \varphi -\cot
\varphi$. Moreover the curve is closed if
and only if $\tan^2 \varphi=p/q \in \Q$. These curves are the links of
the cones whose Hamiltonian stability is studied in Theorem 7.1 of \cite{SW}.

\subsubsection{\bf Case $n=3$.}

There is a Legendrian immersion of the torus $\S^1 \times \S^1$ into $\S^5$ which is flat and minimal:
$\psi(s,t):=(e^{is}, e^{it}, e^{-i(s+t)}).$ Besides this trivial example,
there is a growing literature about C-minimal Legendrian surfaces into $\S^5$ and
H-minimal Lagrangian surfaces into $\C\P^2$. In particular, the existence of many  minimal and H-minimal Lagrangian tori immersed in
 $\C\P^2$ is proved respectively in \cite{HR3} in \cite{CM}, while a family of H-minimal Lagrangian tori in $\C\P^2$ with
$\S^1$-symmetry is explicitly described in \cite{MaSc} and \cite{M1}. However, as discussed in Section 2.1,
 the topological type of the Legendrian lifts are no longer
 the torus a priori, but rather its universal covering, i.e.\ the plane.

\subsubsection{\bf Case $n\geq 4$.} Besides the trivial example of the totally geodesic Legendrian embedding of
$\S^{n-1}$ into $\S^{2n-1}$, we point out that
compact minimal Legendrian immersions of $\S^1 \times \S^{n-2}$ into $\S^{2n-1}$ have been constructed in \cite{A2}. On 
the other hand a construction of C-minimal Legendrian immersions (including non-minimal ones) has been 
described in Section 3 of \cite{CLU} as follows: let $n_1,n_2$ be two integer numbers such that $n_1+n_2=n,$
$\gamma=(\gamma_1, \gamma_2): I \rightarrow \S^3$
 a Legendrian curve which is solution of one of the one-parametre family of o.d.e.
\begin{equation}
\label{eq:gammamu}
(\gamma_j' \overline{\gamma_j})(t) =
 (-1)^{j-1} i\, e^{i\mu t} \,    \overline{\gamma_1}(t)^{n_1} \,
\overline{\gamma_2}(t)^{n_2}, \,   \mu \in \R ,  \, j=1,2,
\end{equation}
and  $\psi_1$ and $\psi_2$
two C-minimal  Legendrian immersions of orientable manifolds $N_1$ and $N_2$ of dimensions 
$n_1-1$ and $n_2-1$
into $\S^{2n_1-1}$ and $\S^{2n_2-1}$ respectively. 
Then the following immersion
\[
\begin{array}{ccc}
\Psi: I \times N_1\times N_2 & \longrightarrow &
\S^{2n-1}, 
\\  \\
(t,x,y) &  \mapsto & ( \gamma_1 (t) \,\psi_1 (x) \,  , \, \gamma_2(t)
\,\psi_2 (y)),
\end{array}
\]
is C-minimal.

\medskip

While it turns out to be difficult to describe the general solution of (\ref{eq:gammamu}), and
therefore to control its closedness, we point out that the 
Legendrian curve with constant curvature $c=\sqrt{n_2/n_1}-\sqrt{n_1/n_2}$ (cf Section 4.3.1)

\[
\gamma_{n_1,n_2}(s)= \frac{1}{\sqrt{n_1+n_2}}\, \left(\sqrt
{n_1}\,e^{i \sqrt \frac{n_2}{n_1} \,s},
      \sqrt {n_2}\,e^{-i \sqrt \frac{n_1}{n_2} \,s}\right)
\]
is a closed solution of Equation (\ref{eq:gammamu}) with parametre $\mu =0.$

Hence, given two compact C-minimal Legendrian immersions of two manifolds $N_1$ and $N_2$ 
 into $\S^{2n_1-1}$ and $\S^{2n_2-1}$ respectively, we are able
to construct a compact immersion of C-minimal immersion of
$\S^1 \times N_1 \times N_2$ into $\S^{2n-1},$ where $n=n_1+n_2.$ 
We may start with simple examples such as  totally geodesic embedding, and iterate the process
in order to get immersions of a number of topological types:

\begin{corollary} \label{coroleg} Given $n$ integer numbers $m_1,...,m_n,$
there exist C-minimal immersions of $\T^{n} \times \prod_{j=1}^n \S^{m_j}$ into $\S^{2N-1}$, 
where $ N = n+\sum_{j=1}^n m_j$.
\end{corollary}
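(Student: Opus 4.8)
The plan is to prove Corollary \ref{coroleg} by induction on $n$, using the construction of Equation (\ref{eq:gammamu}) together with the explicit closed solution $\gamma_{n_1,n_2}$. The base case $n=1$ is precisely the content of Theorem \ref{Th:Ex1} combined with Corollary \ref{Cor:Ex1} (or rather its Legendrian analogue): given a totally geodesic embedding of $\S^{m_1}$ into $\S^{2m_1+1}$, which is minimal hence C-minimal, taking $\alpha$ to be a circle centered at the origin (or, since we want a Legendrian output, applying the construction from Section 4.3.2 with one factor trivial) yields a C-minimal immersion of $\S^1 \times \S^{m_1}$ into $\S^{2(m_1+1)-1}$. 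More directly, the base step is the assertion just before the corollary: starting from the totally geodesic embedding of $\S^{m}$ one obtains a compact C-minimal immersion of $\S^1 \times \S^{m}$ into $\S^{2(m+1)-1}$.

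For the inductive step, suppose we have built a compact C-minimal Legendrian immersion $\psi_1$ of $M_1 := \T^{n-1} \times \prod_{j=1}^{n-1}\S^{m_j}$ into $\S^{2N_1-1}$, where $N_1 = (n-1) + \sum_{j=1}^{n-1} m_j$. Let $\psi_2$ be the totally geodesic Legendrian embedding of $N_2 := \S^{m_n}$ into $\S^{2N_2-1}$ with $N_2 = m_n + 1$; this is minimal, hence C-minimal. Set $n_1 = N_1$ and $n_2 = N_2$, so $n_1 + n_2 = N_1 + N_2 =: N$, and note $N = n + \sum_{j=1}^{n} m_j$ as required. Now apply the construction of Section 4.3.2: the curve $\gamma = \gamma_{n_1,n_2}$ is a closed solution of Equation (\ref{eq:gammamu}) with $\mu = 0$, so the resulting immersion
\[
\Psi(t,x,y) = \big(\gamma_1(t)\,\psi_1(x),\ \gamma_2(t)\,\psi_2(y)\big)
\]
is a C-minimal Legendrian immersion of $\S^1 \times M_1 \times N_2 = \S^1 \times \T^{n-1} \times \prod_{j=1}^{n-1}\S^{m_j} \times \S^{m_n} = \T^{n} \times \prod_{j=1}^{n}\S^{m_j}$ into $\S^{2N-1}$. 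Here I am using that $\gamma_{n_1,n_2}(s)$ traces a closed curve (it is periodic since $\sqrt{n_2/n_1}$ and $\sqrt{n_1/n_2}$ differ by a rational multiple — indeed their ratio is $n_2/n_1 \in \mathbb{Q}$ — so the two exponentials close up simultaneously), hence the domain factor $I$ may be taken to be $\S^1$, preserving compactness.

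The only genuine points to check are bookkeeping: that the dimension count closes ($\dim = 1 + (n-1) + \sum m_j + m_n = n + \sum_{j=1}^n m_j$ for the Legendrian manifold, so the ambient sphere is $\S^{2N-1}$ with $N = n + \sum_{j=1}^n m_j$), that C-minimality is genuinely inherited at each stage (guaranteed by the cited result from \cite{CLU} quoted in Section 4.3.2, which requires only that $\gamma$ solve (\ref{eq:gammamu}) and that $\psi_1,\psi_2$ be C-minimal), and that compactness is preserved (true because $\gamma_{n_1,n_2}$ is closed and products of compact manifolds are compact). I expect no real obstacle: the substance is entirely in Theorem \ref{Th:Ex1} and the construction recalled in Section 4.3.2, and the corollary is a clean induction on the number of spherical factors. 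The one subtlety worth a sentence in the write-up is the verification that $\gamma_{n_1,n_2}$ is indeed closed for \emph{all} integer values of $n_1, n_2$ arising in the induction — which follows because $n_1/n_2$ is rational, forcing commensurable periods of the two components.
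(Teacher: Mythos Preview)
Your inductive strategy is exactly what the paper intends: the corollary is stated immediately after the remark ``we may start with simple examples such as totally geodesic embedding, and iterate the process,'' and no further argument is given there. So on the level of ideas you and the paper agree.

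However, there is a genuine dimensional problem that you have not caught. A Legendrian (and hence a C-minimal) submanifold of $\S^{2N-1}$ has dimension $N-1$, whereas $\T^{n}\times\prod_{j=1}^{n}\S^{m_j}$ has dimension $n+\sum m_j=N$. Your base case already exhibits the contradiction: you assert a C-minimal Legendrian immersion of $\S^{1}\times\S^{m_1}$, which is $(m_1+1)$-dimensional, into $\S^{2(m_1+1)-1}$, where Legendrian submanifolds are $m_1$-dimensional. Your ``dimension count'' in the last paragraph repeats the slip: you compute $\dim=N$ and then declare the ambient sphere to be $\S^{2N-1}$. In fact the statement as printed should read $\T^{n-1}$ rather than $\T^{n}$; this is confirmed by the way the result is used in Corollary~\ref{corolag}, where one further $\S^{1}$ factor is contributed by the planar curve $\alpha$ in the construction $\Phi=\alpha\psi$ of Theorem~\ref{Th:Ex1}. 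With that correction your induction goes through cleanly, the natural base being $n=2$: combine the totally geodesic $\S^{m_1}\hookrightarrow\S^{2m_1+1}$ and $\S^{m_2}\hookrightarrow\S^{2m_2+1}$ via the closed curve $\gamma_{m_1+1,\,m_2+1}$ to obtain a C-minimal Legendrian immersion of $\T^{1}\times\S^{m_1}\times\S^{m_2}$ into $\S^{2(m_1+m_2+2)-1}$, and then proceed exactly as you wrote.
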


 \section{Construction with a Lagrangian surface and two Legendrian immersions}

\subsection{The construction}
In this section we give a detailed study of the geometry of a family of Lagrangian submanifolds
 introduced in \cite{CU3}.

\begin{theorem}\label{Th:Ex2}
Let $\phi=(\phi_1,\phi_2)$ be a Lagrangian
immersion of an orientable surface $\Sigma$ into $\C^2$ such that $\phi_1$ and
$\phi_2$ do not vanish, and
$\psi_1,\psi_2$  two Legendrian
immersions of orientable manifolds $N_1$ and $N_2$ into $\S^{2n_1-1}$ and $\S^{2n_2-1}$, with $n_1+n_2=n.$ Then the
map
$$ \begin{array}{lccc} \Phi :
 &  \Sigma \times N_1 \times N_2 &\  \longrightarrow& \C^{n}              \\
                   &&& \\
&  (p,x,y) & \longmapsto & (\phi_1(p) \psi_1(x),\phi_2(p) \psi_2(y))
   \end{array}$$
is a Lagrangian immersion with induced metric
\[
\bar{g}= g + |\phi_1|^2 g_1 + |\phi_2|^2 g_2,
\]
where $g,g_1$ and $g_2$ are the induced metric on $\Sigma, N_1$ and $N_2$ respectively,
 and whose Lagrangian angle map is
\begin{equation} \label{Beta2}
 \beta_{\Phi}(p,x,y) =(n_1-1) \pi +  G_{\!\phi}(p) + \beta_{\psi_1}(x) + \beta_{\psi_2}(y),
 \end{equation}
where $\beta_{\psi_1}$ and $\beta_{\psi_2}$ are the Legendrian angle maps of $\psi_1$ and $\psi_2$ and
 $G_{\!\phi}$ is the $\R / 2\pi \Z$-valued map defined on $\Sigma$ by
\[
 G_{\!\phi} := \beta_\phi + (n_1-1)\, \arg \phi_1 +(n_2-1)\, \arg\phi_2.
\]

 In addition the gradient of $\beta_{\Phi}$ is given by
\begin{equation}\label{Nabla2}
\nabla_{\!\bar{g}} \beta_{\Phi}=
 \left( \nabla_{\!{g}} G_{\!\phi},
  |\phi_1|^{-2}\nabla_{\!g_1} \beta_{\psi_1},  |\phi_2|^{-2}\nabla_{\!g_2} \beta_{\psi_2} \right).
  \end{equation}
  Moreover, given $X$, $Y$ and $Z$ three vector fields on $\Sigma$, $N_1$ and $N_2$ respectively, and
   denoting by $\bar{D}$ (resp.\ by $D$, $D^{1}$, $D^{2}$) the Levi-Civita connection of $\bar{g}$ (resp.\ of $g,g_1,g_2$),
     we have
\begin{equation}\label{Hess12}
 \bar{D}_{(X,0,0)}\nabla_{\!\bar{g}} \beta_{\Phi}=
 \left(D_X  \nabla_{\!g} G_{\!\phi},
  X( |\phi_1|^{-2}) \nabla_{\!g_1} \beta_{\psi_1}, X( |\phi_2|^{-2})\nabla_{\!g_2} \beta_{\psi_2} \right),
\end{equation}

\begin{equation}\label{Hess22}
 \bar{D}_{(0,Y,0)}\nabla_{\!\bar{g}} \beta_{\Phi}=
 \left( \frac{\nabla_{\!g} |\phi_1|^{-2}}{|\phi_1|^2}  d\beta_{\psi_1}(Y),
   \frac{D^{1}_Y \nabla_{\!g_1} \beta_{\psi_1}}{|\phi_1|^2} + g(\nabla_{\!g} G_{\!\phi}, \nabla_{\!g} |\phi_1|^{-2})Y, 0 \right),
 \end{equation}

 \begin{equation}\label{Hess32}
 \bar{D}_{(0,0,Z)}\nabla_{\!\bar{g}} \beta_{\Phi}=
 \left( \frac{\nabla_{\!g} |\phi_2|^{-2}}{|\phi_2|^2}  d\beta_{\psi_2}(Z), 0,
  \frac{D^{2}_Z \nabla_{\!g_2} \beta_{\psi_2}}{|\phi_2|^2} +  g(\nabla_{\!g} G_{\!\phi}, \nabla_{\!g} |\phi_2|^{-2}) Z \right)
\end{equation}

and
\begin{equation}\label{Delta2}
\Delta_{\bar{g}} \beta_\Phi = \frac{1}{|\phi_1|^{n_1-1}
|\phi_2|^{n_2-1}} \mbox{\rm div}_g
 \left(|\phi_1|^{n_1-1} |\phi_2|^{n_2-1}
  \nabla_{\!g} G_{\!\phi} \right) + \frac{\Delta_{g_1} \psi_1}{|\phi_1|^{2} }
      + \frac{\Delta_{g_2} \psi_2}{|\phi_2|^{2} }.
\end{equation}
\end{theorem}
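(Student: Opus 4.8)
The plan is to mimic exactly the computation carried out in the proof of Theorem \ref{Th:Ex1}, now with the base curve replaced by the Lagrangian surface $\phi$ and with two Legendrian factors instead of one. First I would fix local coordinates $(p_1,p_2)$ on $\Sigma$, $(x_1,\dots,x_{n_1-1})$ on $N_1$ and $(y_1,\dots,y_{n_2-1})$ on $N_2$, and compute the first derivatives of $\Phi$: differentiating in a $\Sigma$-direction produces $(\partial_{p_a}\phi_1 \,\psi_1, \partial_{p_a}\phi_2\,\psi_2)$, while differentiating in an $N_1$- or $N_2$-direction produces $(\phi_1 \partial_{x_j}\psi_1,0)$ or $(0,\phi_2\partial_{y_k}\psi_2)$. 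Taking Hermitian products and using that $\phi$ is Lagrangian in $\C^2$ and that $\psi_1,\psi_2$ are Legendrian (so $\langle \psi_i,\psi_i\rangle\equiv 1$, $\langle \partial\psi_i,\psi_i\rangle\equiv 0$ and $\psi_i^*\omega\equiv 0$) shows that the mixed Hermitian products vanish, hence $\Phi$ is Lagrangian and the induced metric is the claimed block form $\bar g = g + |\phi_1|^2 g_1 + |\phi_2|^2 g_2$; in particular $\bar g^{-1}$ is the analogous block matrix with blocks $g^{-1}$, $|\phi_1|^{-2}g_1^{-1}$, $|\phi_2|^{-2}g_2^{-1}$.

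Next I would compute the Lagrangian angle. Since $\det_{\C}$ is multilinear and the $\Sigma$-block has $\psi_1$ appearing with multiplicity $n_1$ in the first $n_1$ slots and $\psi_2$ with multiplicity $n_2$ in the last, one factors out $\phi_1^{\,n_1-1}\phi_2^{\,n_2-1}$ from the $N_1$- and $N_2$-columns and the residual determinant splits as a product of the Lagrangian-angle determinant of $\phi$ (in $\C^2$) times the Legendrian-angle determinants $\Omega$ of $\psi_1$ and $\psi_2$; the only subtlety is the constant shift $(n_1-1)\pi$, which comes from reordering columns and/or from the sign picked up when one rearranges the $2$-dimensional $\phi$-determinant into the standard position among the $n_1$ copies. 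Taking arguments gives $\beta_\Phi = (n_1-1)\pi + \beta_\phi + (n_1-1)\arg\phi_1 + (n_2-1)\arg\phi_2 + \beta_{\psi_1} + \beta_{\psi_2} = (n_1-1)\pi + G_\phi + \beta_{\psi_1} + \beta_{\psi_2}$, which is Equation (\ref{Beta2}). The gradient formula (\ref{Nabla2}) then follows immediately from $\nabla_{\!\bar g}\beta_\Phi = \sum \bar g^{ab}(\partial_b\beta_\Phi)\partial_a$ and the block form of $\bar g^{-1}$, noting that $\beta_\Phi$ depends on the $\Sigma$-variables only through $G_\phi$, on the $N_1$-variables only through $\beta_{\psi_1}$, and on the $N_2$-variables only through $\beta_{\psi_2}$.

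For the Hessian formulas (\ref{Hess12})--(\ref{Hess32}), the key input is the list of Christoffel symbols of $\bar g$. A direct computation (identical in spirit to the one in Theorem \ref{Th:Ex1}, where the role of $\langle\alpha',\alpha\rangle/|\alpha|^2$ is played here by $\tfrac12\nabla_{\!g}\log|\phi_i|^2$) gives: $\bar D$ restricted to two $\Sigma$-directions is $D$ plus a term normal to $\Sigma$ involving $\nabla_{\!g}|\phi_i|^2$; $\bar D_{(X,0,0)}(0,Y,0)$ and $\bar D_{(0,Y,0)}(X,0,0)$ equal $X(\log|\phi_1|^2)\,(0,Y,0)$; $\bar D_{(0,Y,0)}(0,Y',0) = D^1_Y Y' - |\phi_1|^2 g_1(Y,Y')\,\tfrac12\nabla_{\!g}|\phi_1|^{-2}$-type term in the $\Sigma$-direction (the cross-terms between the two sphere factors vanish since $N_1$ and $N_2$ are independent). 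Feeding $\nabla_{\!\bar g}\beta_\Phi$ from (\ref{Nabla2}) into $\bar D_{(X,0,0)}\,\cdot$, $\bar D_{(0,Y,0)}\,\cdot$, $\bar D_{(0,0,Z)}\,\cdot$ and collecting terms yields (\ref{Hess12})--(\ref{Hess32}). Finally, (\ref{Delta2}) is obtained by tracing: $\Delta_{\bar g}\beta_\Phi = \sum \bar g^{ab}\,\bar g(\bar D_{\partial_a}\nabla_{\!\bar g}\beta_\Phi,\partial_b)$; the $\Sigma$-trace of $D_X\nabla_{\!g}G_\phi$ together with the contributions $g(\nabla_{\!g}G_\phi,\nabla_{\!g}|\phi_i|^{-2})$ coming from the $N_i$-traces in (\ref{Hess22})--(\ref{Hess32}) combine, via the identity $\operatorname{div}_g(f V) = f\operatorname{div}_g V + g(\nabla_{\!g} f, V)$ with $f = |\phi_1|^{n_1-1}|\phi_2|^{n_2-1}$, into the single divergence term $|\phi_1|^{-(n_1-1)}|\phi_2|^{-(n_2-1)}\operatorname{div}_g(|\phi_1|^{n_1-1}|\phi_2|^{n_2-1}\nabla_{\!g}G_\phi)$, while the $N_1$- and $N_2$-traces of $D^i_\cdot\nabla_{\!g_i}\beta_{\psi_i}/|\phi_i|^2$ give $\Delta_{g_i}\beta_{\psi_i}/|\phi_i|^2$.

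The main obstacle is bookkeeping rather than conceptual: one must keep the three blocks and their cross-terms straight, correctly identify all the Christoffel symbols of the warped-type metric $\bar g$ (in particular checking that the would-be cross-terms between the $N_1$ and $N_2$ factors genuinely vanish), and pin down the constant $(n_1-1)\pi$ in the Lagrangian angle, which requires care with the sign of the determinant under the column permutation that moves the $\C^2$-factor $\phi$ into standard position. Everything else is a routine repetition of the argument already given for Theorem \ref{Th:Ex1}, so in the write-up I would state that the computation parallels that proof and only spell out the new features, namely the determinant factorization giving (\ref{Beta2}) and the reassembly of the trace into the divergence form (\ref{Delta2}).
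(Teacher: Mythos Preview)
Your proposal is correct and follows essentially the same route as the paper: local coordinates, block form of $\bar g$ and $\bar g^{-1}$, determinant factorization for $\beta_\Phi$ (which the paper simply cites from \cite{CU3}), explicit Christoffel symbols of the warped-type metric, and then tracing plus the identity $\operatorname{div}_g(fV)=f\operatorname{div}_g V+g(\nabla_{\!g}f,V)$ to reassemble $\Delta_{\bar g}\beta_\Phi$ into the divergence form. One small slip to fix in the write-up: for two $\Sigma$-directions one has exactly $\bar D_{(X,0,0)}(X',0,0)=(D_X X',0,0)$ with no extra ``normal'' term (the warping functions depend only on the base), so your parenthetical correction there should be dropped.
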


\begin{proof}
  For the computation of the Lagrangian angle of the immersion $\Phi,$ we refer to \cite{CU3}.

 Let $(t_\mu), \mu=1,2$, $(x_a), 1 \leq a \leq n_1-1$ and $(y_j), 1
\leq j \leq n_2-1$ be local coordinates on $\Sigma$, $N_1$
and $N_2$ respectively. We shall denote by $(z_\alpha)=(t_\mu,x_a,y_j)$ the
resulting coordinates on $\Sigma \times N_1 \times N_2.$ It is
straightforward that
$$\bar{g}= g + |\phi_1|^2 g_{1} + |\phi_2|^2
g_{2}.$$

Thus, using that the matrix of $\bar g$ in the coordinates
$(z_\al)$ is block-diagonal, we compute:
$$ \nabla_{\!\bar{g}} \beta_\Phi = \sum_{\al,\beta=1}^{n}
   \bar{g}^{\al \beta}
               \frac{\pa \beta_\Phi}{\pa z_\beta}  \pa z_\al$$
$$=\sum_{\mu,\nu=1}^{2}   {g}^{\mu \nu}      \frac{\pa G_{\!\phi}}{\pa t_\mu}  (\pa_{t_\nu},0,0)
               +|\phi_1|^{-2} \sum_{a,b=1}^{n_1-1}
   {g}^{ab}               \frac{\pa \beta_{\psi_1}}{\pa x_b} (0,\pa_{x_a},0)$$
   $$+           |\phi_2|^{-2}\sum_{j,k=1}^{n_2-1}
   {g}^{jk}               \frac{\pa \beta_{\psi_2}}{\pa y_k} (0,0,\pa_{y_j}),$$
$$ =(\nabla_{\!{g}} G_{\!\phi},0,0)+
  |\phi_1|^{-2}(0,\nabla_{\!g_1} \beta_{\psi_1},0)+  |\phi_2|^{-2}(0,0,\nabla_{\!g_2} \beta_{\psi_2}).$$
 Next, a long but straightforward computation gives
$$\bar{D}_{\pa_{t_\mu}} \pa_{t_\nu} =  D_{\pa_{t_\mu}} \pa_{t_\nu},  \quad \quad
\bar{D}_{\pa_{x_a}} \pa_{t_\mu}=\bar{D}_{\pa_{t_\mu}} \pa_{x_a} =\frac{\pa (|\phi_1|^{-2})}{\pa t_\mu} \pa_{x_a},$$
$$\bar{D}_{\pa_{y_j}} \pa_{t_\mu}=\bar{D}_{\pa_{t_\mu}} \pa_{y_j} =\frac{\pa (|\phi_2|^{-2})}{\pa t_\mu} \pa_{y_j},$$
$$\bar{D}_{\pa_{x_a}} \pa_{x_b} = \nabla_{\!g} (|\phi_1|^{-2}) g_{ab} + D^1_{\pa_{x_a}} \pa_{x_b},$$
$$\bar{D}_{\pa_{y_j}} \pa_{y_k} =\nabla_{\!g} (|\phi_2|^{-2})  g_{jk} + D^2_{\pa_{y_j}} \pa_{y_k}.$$
From these equations it not difficult to obtain Equations (\ref{Hess12}), (\ref{Hess22}) and (\ref{Hess32}).
It remains to compute the Laplacian of $\beta_\Phi$:

$$\Delta_{\bar{g}} \beta_\Phi =\sum_{\al ,\beta=1}^{n}
 \bar{g} (\bar{D}_{\pa_{z_\al}} \nabla_{\!\bar{g}} \beta_\Phi, \pa_{z_\beta})\bar{g}^{\al \beta}$$
$$  = \sum_{\mu,\nu=1}^2
              \bar{g} (\bar{D}_{\pa_{t_\mu}} \nabla_{\!\bar{g}} \beta_\Phi, \pa_{t_\nu})\bar{g}^{\mu \nu}+ \sum_{a,b=1}^{n_1-1}
              \bar{g} (\bar{D}_{\pa_{x_a}} \nabla_{\!\bar{g}} \beta_\Phi, \pa_{x_b})\bar{g}^{ab}
+  \sum_{j,k=1}^{n_2-1} \bar{g} (\bar{D}_{\pa_{y_j}} \nabla_{\!\bar{g}} \beta_\Phi, \pa_{y_k})\bar{g}^{jk}$$
$$  = \sum_{\mu,\nu=1}^2
              g (\bar{D}_{\pa_{t_\mu}} \nabla_{\!\bar{g}} G_{\!\phi}, \pa_{t_\nu})g^{\mu \nu}+ \sum_{a,b=1}^{n_1-1}
              g_1 (\bar{D}_{\pa_{x_a}} \nabla_{\!\bar{g}} \beta_\Phi, \pa_{x_b})g^{ab}
+  \sum_{j,k=1}^{n_2-1} g_2 (\bar{D}_{\pa_{y_j}} \nabla_{\!\bar{g}} \beta_\Phi, \pa_{y_k})g^{jk}$$
$$  = \Delta_g G_{\!\phi}+ \sum_{a,b=1}^{n_1-1}
            \left( \frac{1}{|\phi_1|^{2}} g_1 (D^{1}_{\pa_{x_a}} \nabla_{\!g_1} \beta_{\psi_1}, \pa_{x_b} )
            +g(\nabla_{\!g} G_{\!\phi}, \nabla_{\!g} |\phi_1|^{-2}) g_{ab}   \right)g^{ab}$$
              $$
+  \sum_{j,k=1}^{n_2-1}
\left( \frac{1}{|\phi_2|^{2}} g_2 (D^{2}_{\pa_{y_j}} \nabla_{\!g_2} \beta_{\psi_2}, \pa_{y_k})
 +g(\nabla_{\!g} G_{\!\phi}, \nabla_{\!g} |\phi_2|^{-2}) g_{jk}\right)    g^{jk}$$
$$= \Delta_g G_\phi+ \frac{1}{|\phi_1|^2} \Delta_{g_1} \beta_{\psi_1}+ (n_1-1) g(\nabla_{\!g} G_{\!\phi}, \nabla_{\!g} |\phi_1|^{-2})$$
$$+
\frac{1}{|\phi_2|^2} \Delta_{g_2} \beta_{\psi_2}+ (n_2-1) g(\nabla_{\!g} G_{\!\phi}, \nabla_{\!g} |\phi_2|^{-2}).$$

Finally, we check that
$$ \frac{1}{|\phi_1|^{n_1-1}
|\phi_2|^{n_2-1}} \mbox{\rm div}_{\!g}
 \left(|\phi_1|^{n_1-1} |\phi_2|^{n_2-1}
  \nabla_{\!g} G_{\!\phi} \right) $$
 $$ =\Delta_g G_{\!\phi} + (n_1-1) g(\nabla_{\!g} G_{\!\phi}, \nabla_{\!g} |\phi_1|^{-2})
 +(n_2-1) g(\nabla_{\!g} G_{\!\phi}, \nabla_{\!g} |\phi_2|^{-2}),$$
which completes the proof.

\end{proof}

\begin{remark}
{\rm

If the Lagrangian immersion $\phi $ is the product of two planar curves $(\al_1,\al_2),$
 then $\Phi $ is the product of two Lagrangian immersions $\Phi_1$ and $\Phi_2$ of the type
 described in  Theorem \ref{Th:Ex1}. This fact will be important for the classification of
 immersions $\Phi$ with parallel mean curvature vector (Corollary \ref{Cor:Ex2}).

If the Lagrangian immersion is of the type of Theorem
\ref{Th:Ex1}, i.e.\ $\phi =\alpha \gamma $ where $\al$ is a planar
curve and $\gamma$ is a Legendrian curve in $\S^3$, then the
immersion $\Phi $ is also of the type of Theorem \ref{Th:Ex1},
with $\psi=(\gamma_1 \psi_1,\gamma_2 \psi_2)$ is of the type
described in \cite{CLU} (see also Section 4.3.3). Moreover, any
Lagrangian immersion in $\C^n$ invariant under the action of
$SO(n_1)\times SO(n_2)$, with $n_1+n_2=n$ and $n_1,n_2\geq 2$, is
congruent to an open subset of one of the Lagrangian submanifolds
of Theorem \ref{Th:Ex2}, where $\psi_1(x)=x$ and $\psi_2(y)=y$ are
the totally geodesic embeddings of $\,\S^{n_i-1}$ into
$\,\S^{2n_i-1}$, $i=1,2$. }
\end{remark}

The  Lagrangian immersions described in Theorem \ref{Th:Ex2} have
singularities at the points $(p,x,y)\in \Sigma\times N_1\times
N_2$ where either $\phi_1(p)=0$ or $\phi_2(p)=0$.

It has been proved in \cite{CU3} (and it follows from Equation (\ref{Beta2})) that the immersion
$\Phi $ described in Theorem \ref{Th:Ex2} is minimal if and only
if $\psi_1$ are $\psi_2$ are minimal and $G_\phi $ is constant. If
$n=3$, then either $(n_1,n_2)=(2,1)$ or $(1,2)$. In
both cases, $\psi_1$ or $\psi_2$ must be a Legendrian geodesic in
$\S^3$, which can be parameterized, up to congruence by
$\psi(t)=\frac{1}{\sqrt 2}(e^{it},e^{-it})$. The corresponding
Lagrangian immersion $\Phi$ takes the following form:
\[
\begin{array}{c}
\Phi:\Sigma\times\R\longrightarrow\C^3\\
(p,t)\mapsto\left(\frac{\phi_1(p)}{\sqrt
2}e^{it},\frac{\phi_1(p)}{\sqrt 2}e^{-it},\phi_2(p)\right).
\end{array}
\]
These examples, which are invariant under the action of $U(1)\equiv SO(2)$ on $\C^3$ given by
\[
e^{it}\cdot(z_1,z_2,z_3)=(e^{it}z_1,e^{-it}z_2,z_3)
\]
  have been studied in detail  by Joyce in \cite{J2} in the
minimal Lagrangian case.

\medskip

Before we  characterize  those immersions $\Phi$ which have parallel mean curvature vector  or are compact and H-minimal, we need
 a technical lemma:

\begin{lemma} \label{lag}
Let $\Sigma$  be a connected, properly immersed Lagrangian
 surface of $\C^2$ with coordinates $(\phi_1,\phi_2)$ and such that  the vector fields  $\nabla_{\!g} |\phi_1|$ and $\nabla_{\!g} |\phi_2|$ are linearly dependent.
  Then either $\Sigma$ is contained in a product of two planar curves $\al_1$ and $\al_2$ (one of them being  an arc of circle), or
  it may be locally parametrized 
  by an immersion of the form
  $$\phi(s,t)=(\sqrt{a x(s)+b} \, e^{i(s+t)},\sqrt{x(s)} e^{i(s-a t)}).$$
  where $x(s)>0$ and $(a,b) \in \Q \times \R.$
\end{lemma}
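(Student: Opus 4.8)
The plan is to exploit the hypothesis that $\nabla_{\!g}|\phi_1|$ and $\nabla_{\!g}|\phi_2|$ are pointwise linearly dependent, which forces a strong coupling between the two moduli $|\phi_1|$ and $|\phi_2|$, and then to combine this with the Lagrangian condition $\phi^*\omega=0$ on $\Sigma$. First I would set up coordinates adapted to the level sets: away from the (closed, nowhere-dense) set where both gradients vanish, there is locally a function, call it $x$, whose gradient is proportional to both $\nabla_{\!g}|\phi_1|$ and $\nabla_{\!g}|\phi_2|$; so we may write $|\phi_1|^2 = f(x)$ and $|\phi_2|^2 = h(x)$ for smooth one-variable functions $f,h$. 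Writing $\phi_j = |\phi_j|\, e^{i\theta_j}$, the Lagrangian constraint $\Im(\phi_1'\bar\phi_1 + \phi_2'\bar\phi_2)=0$ (equivalently $d(|\phi_1|^2 d\theta_1 + |\phi_2|^2 d\theta_2)=0$ after using $\langle\phi',J\phi\rangle$ formulas as in (\ref{arg1})) becomes a relation among $f,h$ and the phase functions $\theta_1,\theta_2$.

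Next I would separate two cases according to whether $dx$ is the only "transverse" direction or whether the phases provide a genuinely two-dimensional coordinate system. If along the level sets of $x$ both phases are constant, then $\Sigma$ is (locally, hence by connectedness and properness globally up to the degenerate locus) contained in the product of the two planar curves $s\mapsto \sqrt{f(s)}e^{i\theta_1(s)}$ and $s\mapsto\sqrt{h(s)}e^{i\theta_2(s)}$; the Lagrangian condition then degenerates and one checks that one of these curves must have $|\phi_j|^2$ constant, i.e.\ be an arc of a circle, giving the first alternative. Otherwise, choose the second coordinate $t$ so that moving in $t$ changes the phases; using the Lagrangian relation together with $|\phi_1|^2=f(x)$, $|\phi_2|^2=h(x)$, a direct integration shows the phases must be affine in $t$ with $x$-dependent slopes, and matching the closed-form condition forces $\theta_1 = s+t$, $\theta_2 = s - a t$ with $f = a\,x + b$, $h = x$, where the constant $a$ must be rational (this is exactly the monodromy/closedness condition on the two phases, so that $\phi$ descends to a well-defined immersion rather than merely a local one); this yields the stated normal form $\phi(s,t)=(\sqrt{a x(s)+b}\,e^{i(s+t)},\sqrt{x(s)}\,e^{i(s-at)})$ after relabelling $x$-parameter by $s$.

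The main obstacle I anticipate is the bookkeeping in the second case: correctly identifying which combination of phases is forced to be affine in the new coordinate, and extracting the rationality of $a$ as the obstruction to the local immersion closing up into a genuine (well-defined, single-valued) immersion of a surface. One must be careful that the linear-dependence hypothesis only gives information up to sign and on an open dense set, so a short continuity/connectedness argument is needed to globalize the dichotomy and to rule out a chain of locally incompatible normal forms. I would handle the degenerate locus (where $\nabla|\phi_1|=\nabla|\phi_2|=0$) by noting it is closed with empty interior — otherwise $|\phi_1|,|\phi_2|$ are locally constant, landing in the product case with both curves arcs of circles — and then invoking the real-analyticity (or just density plus connectedness of a properly immersed surface) to conclude the same normal form holds on all of $\Sigma$.
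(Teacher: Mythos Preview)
Your overall strategy---writing $|\phi_1|^2=f(x)$, $|\phi_2|^2=h(x)$ and then analyzing the phases via the Lagrangian condition---is in the right direction, and is close in spirit to the paper's argument. However, there is a genuine gap in how you obtain the affineness of $f$ (i.e.\ $f(x)=ax+b$) and the rationality of $a$.

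You write that ``matching the closed-form condition forces $\theta_1=s+t$, $\theta_2=s-at$ with $f=ax+b$'' and that $a\in\Q$ is ``the monodromy/closedness condition on the two phases, so that $\phi$ descends to a well-defined immersion.'' This is not the mechanism. The Lagrangian condition alone gives only $\phi(s,t)=(\sqrt{u(x(s))}\,e^{i(s+t)},\sqrt{x(s)}\,e^{i(s-u'(x(s))t)})$ for an \emph{arbitrary} smooth $u$; nothing forces $u$ to be affine, and there is no descent problem since $\Sigma$ is already given and we are merely parametrizing it locally. The paper's argument uses properness in an essential way: from $|\phi_1|^2=u(|\phi_2|^2)$ one gets $\Sigma\subset Q:=\{|z_1|^2=u(|z_2|^2)\}$, so the normal $(z_1,-u'(|z_2|^2)z_2)$ to $Q$ is normal to $\Sigma$, and by the Lagrangian condition the vector field $V=(iz_1,-iu'(|z_2|^2)z_2)$ is \emph{tangent} to $\Sigma$. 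Its integral curves $t\mapsto(z_1^0e^{it},z_2^0e^{-iu'(|z_2^0|^2)t})$ therefore lie in $\Sigma$. If $u'(|z_2^0|^2)\notin\Q$, such a curve is dense in the torus $\{|z_1|=|z_1^0|,|z_2|=|z_2^0|\}$, and properness forces $\Sigma$ to be that torus---a product of circles. Otherwise $u'$ is rational, hence locally constant by continuity, so $u(x)=ax+b$ with $a\in\Q$.

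A secondary issue: your first case (``phases constant along level sets of $x$'') would make $\phi$ depend only on $x$, giving a curve rather than a surface, so that dichotomy is not well-posed. The paper's split is cleaner: either one of the $|\phi_j|$ is constant (and a short contradiction argument via $\omega(\phi_s,\phi_t)=1$ shows $\Sigma$ must then be a product, with that factor a circle), or both are non-constant and the hypersurface argument above applies.
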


\begin{proof} Suppose first that $|\phi_1|$ is constant.
We proceed by contradiction assuming that $\Sigma$ is not a product of curves. Then it
may be locally parametrized by
$$ \phi(s,t)=( C e^{i \theta(s,t)},s+it),$$
where $s+it$ belongs to an open subset of $\C$, $C$ is a positive constant and $\theta$ an $\R / 2\pi \Z$-valued map.
If follows that $\omega(\phi_s,\phi_t)=1$, which contradicts the Lagrangian assumption. Of course the argument is the same if 
$|\phi_2|$ is constant.

It remains to treat the case in which $|\phi_1|$ and $|\phi_2|$ are not constant. There exists therefore a smooth real map $u$
such that $|\phi_1|^2 = u(|\phi_2|^2).$ Introduce the hypersurface
$$ Q :=\{ (z_1,z_2) \in \C^2, |z_1|^2 - u (|z_2|^2) = 0 \}.$$
Since $\Sigma \subset Q, $ we have 
$ T^\perp Q \subset T^\perp \Sigma.$ 
 On the other hand, 
at a point $(z_1,z_2)$ of $Q$, $T^\perp_{(z_1,z_2)} Q = (z_1, -u'(|z_2|^2) z_2) \R,$ thus  
$(z_1,-u'(|z_2|^2) z_2)\in T^\perp_{(z_1,z_2)} \Sigma.$
By the Lagrangian assumption, the vector field $V(z_1,z_2)=(iz_1, -iu'(|z_2|^2) z_2)$ is tangent to $\Sigma$, so its integral curves
 $$t \mapsto (z_1^0 e^{it},z_2^0 e^{-i u'(|z_2^0|^2) t}),$$
  with initial point $(z_1^0, z_2^0) \in \Sigma,$ are contained in $\Sigma.$
 It follows that $\Sigma$ may be locally parametrized by
$$ \phi(s,t)=(z_1(s) e^{it},z_2(s) e^{-iu'(|z_2(s)|^2) t}),$$
where $(z_1(s),z_2(s))$ is a curve in $\Sigma$ tranversal to the integral curves of the vector field $V.$
Moreover, we may replace the curve $(z_1(s),z_2(s))$ by $(\tilde{z}_1(s),\tilde{z}_2(s))=(z_1(s) e^{i \theta(s)}, z_2(s) e^{-iu'(|z_2(s)|^2) \theta(s)})$
without changing the image of $\phi$. A routine computation shows that we can choose the function $\theta(s)$ in order to have 
  $\arg \tilde{z}_1 = \arg \tilde{z}_2.$ 
 Using the assumption $(\tilde{z}_1(s),\tilde{z}_2(s)) \in Q,$ we deduce that 
 $\tilde{z}_1(s)=  \sqrt{u(|\tilde{z}_2(s)|^2)}\tilde{z}_2(s)$ so, writing $\tilde{z}_2(s)= \sqrt{x(s)} e^{is},$ we have
 $\tilde{z}_1(s)= \sqrt{u(x(s))} e^{is}$ and we obtain
$$\phi(s,t)=(\sqrt{u(x(s))} e^{i(s+t)},\sqrt{x(s)} e^{i(s-u'(x(s)) t)}).$$
We claim now that the properly immersed assumption imposes a restriction on $u$:
 if $u'(|z_2^0|^2)$ does not belong to $ \Q,$ then the integral curves of $V$ are not closed; more precisely they are
 dense in the torus $\{(z_1, z_2) \in \C^2 \big| \, \, |z_1|= |z_1^0|, |z_2|=|z_2^0| \}.$ Since $\Sigma$ is assumed to be properly immersed, 
 it must be contained in this torus, which is a product of curves. If it is not the case,
  $u'(|z_2^0|^2)$  must be rationaly related to $\Q$,  so it is constant and $u(x)=a x + b,$ where $(a,b) \in \Q \times \R$, which implies the
 claimed formula.
\end{proof}

\begin{corollary}\label{Cor:Ex2}
Consider the Lagrangian immersion
$$ \begin{array}{lccc} \Phi :
 &  \Sigma \times N_1 \times N_2 &\  \longrightarrow& \C^{n}              \\
                   &&& \\
&  (p,x,y) & \longmapsto & (\phi_1(p) \psi_1(x),\phi_2(p) \psi_2(y))
   \end{array}$$
 where $\phi=(\phi_1,\phi_2):\Sigma\rightarrow\C^2$ is a  Lagrangian
immersion of an orientable, properly immersed surface $\Sigma$ with Lagrangian angle $\beta_\phi$ and
$\psi_i,$ $i=1,2$ are two Legendrian
immersions of orientable manifolds $N_i$ into $\S^{2n_i-1},$ where $n_1+n_2=n.$
 Then $\Phi$
 has parallel mean curvature vector if and only if
\begin{itemize}
    \item[-]  either (i) it is minimal; in this case the immersion $\phi$ is such that
$G_{\!\phi}:=\beta_\phi + (n_1-1)\, \arg \phi_1 +(n_2-1)\, \arg\phi_2$ is constant and the immersions
$\psi_1$ and $\psi_2$ are minimal (see \cite{CU3});
\item[-] or (ii) it is a product $\Phi=(\Phi_1,\Phi_2)$ of two Lagrangian immersions into $\C^{n_1}$ and $\C^{n_2}$
with parallel mean curvature vector (as described in Theorem \ref{Th:Ex1});  in this case the immersion $\phi$ is a product
of two planar curves $(\al_1,\al_2)$;
\item[-] or (iii) $\phi$ is, up to scaling, the immersion
$$\phi(s,t)=(\cos s \, e^{it}, \sin s \, e^{it})$$ 
 and the immersions
$\psi_1$ and $\psi_2$ are minimal.
\end{itemize}
  Moreover, if $N_1$ and $N_2$ are compact, the immersion $\Phi$ is H-minimal
  if and only if
   the immersion $\phi$ is such that the vector field
   $$A_\phi:=|\phi_1|^{n_1-1} |\phi_2|^{n_2-1}
  \nabla_{\!g} G_{\!\phi}$$
   is divergence free and
the immersions $\psi_1$ and $\psi_2$ are  C-minimal.
\end{corollary}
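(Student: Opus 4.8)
The plan is to translate the parallel mean curvature condition into the vanishing of $\bar{D}\nabla_{\!\bar{g}}\beta_{\Phi}$ and then read off consequences from the Hessian formulas of Theorem \ref{Th:Ex2}. First I would record the general fact that for a Lagrangian immersion of $L$ into $\C^n$ the mean curvature vector is parallel in the normal bundle if and only if $\nabla_{\!\bar{g}}\beta$ is a parallel vector field for the induced metric: since $J$ is parallel in $\C^n$ and $nH=J\nabla_{\!\bar{g}}\beta$, comparing the tangential parts of $\bar\nabla_W(JH)=J\bar\nabla_WH$ gives $\bar{D}_W(JH)=J\nabla^\perp_WH$, and $JH=-\frac1n\nabla_{\!\bar{g}}\beta$. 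Substituting this into Equations (\ref{Hess12}), (\ref{Hess22}), (\ref{Hess32}) and using that $D^{i}\nabla_{\!g_i}\beta_{\psi_i}$ is a tensor on $N_i$ while $|\phi_i|^{2}g(\nabla_{\!g}G_{\!\phi},\nabla_{\!g}|\phi_i|^{-2})$ is a function on $\Sigma$, one finds that $\Phi$ has parallel mean curvature vector precisely when: $(1)$ $\nabla_{\!g}G_{\!\phi}$ is parallel on $\Sigma$; $(2)$ $\psi_1$ and $\psi_2$ have parallel mean curvature vector; $(3)$ for each $i$, $|\phi_i|$ is constant or $\psi_i$ is minimal; and $(4)$ $g(\nabla_{\!g}G_{\!\phi},\nabla_{\!g}|\phi_i|^{-2})\equiv0$ for $i=1,2$. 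The reverse implications are then immediate: a minimal immersion automatically has parallel mean curvature (case (i)); if $\phi=(\al_1,\al_2)$ is a product then, by the remark after Theorem \ref{Th:Ex2}, $\Phi$ is a Riemannian product $\Phi_1\times\Phi_2$ of Theorem \ref{Th:Ex1} immersions, so $\bar{D}\nabla\beta_{\Phi}\equiv0$ iff both $\bar{D}\nabla\beta_{\Phi_i}\equiv0$, i.e.\ (by Corollary \ref{Cor:Ex1}) case (ii); and for $\phi(s,t)=(\cos s\,e^{it},\sin s\,e^{it})$ a direct computation gives $\beta_\phi=2t-\frac\pi2$, $g=ds^2+dt^2$, $G_{\!\phi}=n\,t-\frac\pi2$, so $\nabla_{\!g}G_{\!\phi}=n\,\pa_t$ is parallel and orthogonal to $\nabla_{\!g}|\phi_i|^{-2}$ (a multiple of $\pa_s$), whence $(1)$--$(4)$ hold as soon as $\psi_1,\psi_2$ are minimal (case (iii)).

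For the converse I would split according to whether $\nabla_{\!g}G_{\!\phi}$ vanishes; on a connected surface a parallel field is identically zero or nowhere zero. If $\nabla_{\!g}G_{\!\phi}\equiv0$, then $G_{\!\phi}$ is constant, and if some $|\phi_i|$ were also constant then $\nabla_{\!g}|\phi_1|$ and $\nabla_{\!g}|\phi_2|$ would be linearly dependent, so Lemma \ref{lag} would apply; its second alternative requires $|\phi_1|,|\phi_2|$ non-constant, so $\phi=(\al_1,\al_2)$ with $\al_i$ an arc of circle centered at the origin, and then $G_{\al_i}'=n_i/R_i\neq0$ contradicts $\nabla_{\!g}G_{\!\phi}=0$; hence by $(3)$ both $\psi_i$ are minimal and $\beta_{\Phi}$ is constant, which is case (i) (conversely $\Phi$ minimal forces, via (\ref{Nabla2}), exactly the data of (i)). If instead $\nabla_{\!g}G_{\!\phi}$ is nowhere zero, then by $(4)$ both $\nabla_{\!g}|\phi_i|^{-2}$ are orthogonal to it, hence $\nabla_{\!g}|\phi_1|$ and $\nabla_{\!g}|\phi_2|$ are linearly dependent and Lemma \ref{lag} applies; in its first alternative $\Sigma$ lies in a product of two planar curves and one concludes case (ii) exactly as above.

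The main obstacle is the second alternative of Lemma \ref{lag}, which must be shown to yield only case (iii). There $\phi(s,t)=(\sqrt{a\,x(s)+b}\,e^{i(s+t)},\sqrt{x(s)}\,e^{i(s-at)})$ with $(a,b)\in\Q\times\R$, where necessarily $a\neq0$ and $x$ is non-constant (this branch of the lemma assumes $|\phi_1|,|\phi_2|$ non-constant), so $(3)$ forces $\psi_1,\psi_2$ minimal. A careful computation gives $\arg\phi_1=s+t$, $\arg\phi_2=s-at$, $\beta_\phi=2s+(1-a)t+f(s)$ for some smooth $f$, hence $G_{\!\phi}=n\,s+(n_1-a\,n_2)\,t+f(s)$, and for the induced metric on $\Sigma$ one finds $g_{st}\equiv b$ and $g_{tt}=a(a+1)x(s)+b$. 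Then $(4)$ (using that $|\phi_i|^{-2}$ depends only on $s$ with non-vanishing derivative, as $x$ is non-constant and $a\neq0$) reduces to $g_{tt}\,(n+f')=b\,(n_1-a\,n_2)$, while condition $(1)$, i.e.\ $\mathrm{Hess}_gG_{\!\phi}\equiv0$, has mixed component (after computing $\Gamma^s_{st},\Gamma^t_{st}$ and substituting the previous identity) equal to $-g_{tt}'\,(n_1-a\,n_2)/(2g_{tt})$; since $\nabla_{\!g}G_{\!\phi}\neq0$ forces $n_1-a\,n_2\neq0$, we get $g_{tt}$ constant, i.e.\ $a(a+1)x(s)$ constant, hence $a=-1$. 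Then $\phi(s,t)=e^{i(s+t)}\big(\sqrt{b-x(s)},\sqrt{x(s)}\big)$ takes values in the sphere of radius $\sqrt b$ of $\C^2$; writing $\sqrt{b-x}=\sqrt b\cos\te(s)$, $\sqrt x=\sqrt b\sin\te(s)$, rescaling by $1/\sqrt b$ and using $(\te,\,s+t)$ as parameters turns $\phi$ into $(\cos\te\,e^{i(s+t)},\sin\te\,e^{i(s+t)})$, which up to scaling is the immersion of case (iii).

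For the H-minimal statement with $N_1,N_2$ compact I would use the Laplacian formula (\ref{Delta2}); $\Delta_{\bar{g}}\beta_{\Phi}\equiv0$ reads
\[
\frac{\mathrm{div}_g A_\phi}{|\phi_1|^{n_1-1}|\phi_2|^{n_2-1}}+\frac{\Delta_{g_1}\beta_{\psi_1}(x)}{|\phi_1(p)|^2}+\frac{\Delta_{g_2}\beta_{\psi_2}(y)}{|\phi_2(p)|^2}=0 .
\]
Fixing $p$ and varying in turn $y$ and $x$ shows $\Delta_{g_1}\beta_{\psi_1}$ and $\Delta_{g_2}\beta_{\psi_2}$ are constant; integrating over the compact manifolds $N_1,N_2$ forces both constants to vanish, so $\psi_1,\psi_2$ are C-minimal, and the remaining term then gives $\mathrm{div}_g A_\phi\equiv0$. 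Conversely these two conditions annihilate every term of (\ref{Delta2}), so $\Phi$ is H-minimal; I expect no difficulty here, the real work being the metric and Hessian computation of the previous paragraph.
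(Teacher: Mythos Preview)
Your proof is correct and follows the same overall strategy as the paper: translate parallel mean curvature into the vanishing of $\bar D\nabla_{\!\bar g}\beta_\Phi$, read off constraints from the Hessian formulas of Theorem~\ref{Th:Ex2}, and invoke Lemma~\ref{lag}. Your explicit intermediate characterization via conditions (1)--(4) is a clean way to organize the case analysis, and the H-minimal argument is identical to the paper's.

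There is one genuine difference worth noting in the handling of the second alternative of Lemma~\ref{lag}. The paper computes $G_\phi$ explicitly (including the $\arctan$ term) and then uses the $(s,t)$ and $(t,t)$ components of $\mathrm{Hess}_gG_\phi$ alone to conclude that either $a=-1$ or $b=0$; the case $b=0$ is then reduced to the setting of Theorem~\ref{Th:Ex1} and disposed of via Corollary~\ref{Cor:Ex1}. You instead combine condition~(4) (which gives $g_{tt}(n+f')=b(n_1-an_2)$) with the single $(s,t)$ Hessian component, and after substituting (4) this component simplifies to $-g_{tt}'(n_1-an_2)/(2g_{tt})$, forcing $a=-1$ directly (since $n_1-an_2=0$ would, by (4), make $\nabla_gG_\phi$ vanish). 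This bypasses the $b=0$ branch entirely and avoids the appeal to Corollary~\ref{Cor:Ex1}, so your route is somewhat more economical at this step, at the cost of using the orthogonality condition~(4) in addition to the parallelism of $\nabla_gG_\phi$. Both arguments rely on the same ingredients from Theorem~\ref{Th:Ex2} and Lemma~\ref{lag}; the difference is purely in which identities are combined.
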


\begin{proof}

Assume that $|\phi_1|$ and $|\phi_2|$ are not constant. The fact that $\nabla_{\!\bar{g}}\beta_\Phi$ is parallel and
Equations (\ref{Hess12}), (\ref{Hess22}) and (\ref{Hess32}) of Theorem \ref{Th:Ex2} imply that $\beta_{\psi_1}$ and
$\beta_{\psi_2}$ are constant and
that
 \begin{equation} \label{ps1}
 g(\nabla_{\!g} G_{\!\phi}, \nabla_{\!g} |\phi_1|^{-2})=0, 
 \end{equation}
  \begin{equation} \label{ps2}
  g(\nabla_{\!g} G_{\!\phi}, \nabla_{\!g} |\phi_2|^{-2})=0.
 \end{equation}
Hence, if the vector fields  $\nabla_{\!g} |\phi_1|^{-2}$ and $\nabla_{\!g} |\phi_2|^{-2}$ are independent, the vector field
      $\nabla_{\!g} G_{\!\phi}$  vanishes, so  the map $G_{\!\phi}$ is constant. 
    By Equation (\ref{Beta2}) of Theorem \ref{Th:Ex2}, it follows that the immersion
      $\Phi$ is minimal and we are in case \em(i). \em 
We now assume that $G_\phi$ is not constant and that the vector fields  $\nabla_{\!g} |\phi_1|^{-2}$ and $\nabla_{\!g} |\phi_2|^{-2}$ are linearly dependent. Using  Lemma \ref{lag}, and since $|\phi_1|$ and $|\phi_2|$ are not constant, the immersion 
 $\phi$ takes the form
 $$\phi(s,t)=(\sqrt{a x(s)+b} \,  e^{i(s+t)},\sqrt{x(s)} \, e^{i(s-a t)}),$$
 where $x(s) >0$ is not constant and $(a,b) \in \Q \times \R.$
We easily compute that 
  $$ G_\phi = (n_1+n_2)s + (n_1 - an_2) t -\arctan \left(\frac{ x'((1+a)ax + b)}{2(1+a)x(ax+b)} \right)$$
  and use the assumption that $\nabla^g G_{\phi}$ is parallel (coming from Equation (\ref{Hess12})), more precisely, the vanishing
  of $g(\nabla^g_{\pa_s} G_\phi, \pa_t) $ and $g(\nabla^g_{\pa_t} G_\phi, \pa_t) .$
    A tedious but straightforward computation involving the Christoffel symbols of the induced metric $g$ shows that either  $a=-1$ or $b$  vanishes. 
If $b$ vanishes  the immersion $\phi$ takes the simpler form
$$ \phi(s,t)=(\sqrt{a} \, \alpha(s) e^{it}, \alpha(s) e^{-iat}),$$
where $\al:= \sqrt{x(s)} e^{is}.$
Hence the immersion $\Phi$ can be written as $\Phi(s,t,x,y)=\sqrt{1+a} \, \al(s) \psi(t,x,y)$, where
$$\psi(t,x,y):=\frac{1}{\sqrt{1+a}}(\sqrt{a} e^{it} \psi_1(x), e^{-ia t} \psi_2(y))$$
is a Legendrian immersion into $\S^{2n-1}.$
In other words, we are in the case of Section 4, and we can rely on the Corollary \ref{Cor:Ex1} to deduce that either the immersion
 $\Phi$ is minimal, which is case \em(i), \em unless the curve $\alpha$ is a circle. However this last case is excluded since $x$ is assumed to
 be not constant.
If 
 $a=-1,$ setting $\sigma:= \arcsin \sqrt{x/b}$ and $\tau :=s+t,$ the immersion $\phi$ becomes
 $$ \phi(\sigma,\tau)=(b \cos \sigma e^{i\tau},b \sin \sigma e^{i\tau}),$$
 so we are in case \em (iii). \em
  Finally, if one of the quantities $|\phi_1|$ and $|\phi_2|$ is constant, by the second point of Lemma \ref{lag}, the immersion
      $\phi$ is a product of planar curves and we get the case \em (ii). \em

To complete the proof, we observe that if $\Phi$ is H-minimal, by Equation (\ref{Delta2}),
$\Delta_{g_1} \beta_{\psi_1}$ and $\Delta_{g_2} \beta_{\psi_2}$ must be constant. By the compactness assumption, those
constants must be zero, so the immersions $\psi_1$ and $\psi_2$ are C-minimal and the immersion $\phi$ is such that
the vector field $|\phi_1|^{n_1-1} |\phi_2|^{n_2-1}
  \nabla_{\!g} G_\phi$ is divergence free.
\end{proof}

In Section 4.3 we have given a description of
several compact, C-minimal  Legendrian immersions in odd-dimensional
spheres. On the other hand, it turns out to be  difficult to solve
the equation $ \mbox{div }A_\phi=0$ in full generality. In the next
section we make
use  of a construction introduced in \cite{CCh} in order to get a countable family of solutions
to this equation.

\subsection{A special class of Lagrangian surfaces of $\C^2$ satisfying  $ \mbox{div}A_\phi=0$}
We follow the notation of Section 2.3 and consider
  a Legendrian curve $\gamma :$
$I_1 \to \S^3$ and  a
Legendrian curve $\alpha :$  $I_2 \to \H^3_1$, that we both assume to be parametrized by arclength.
Consider the map: $\phi:I_1\times I_2\subset {\R}^2\to {\C}^2
={\C}\times{\C}$
defined by
\[
\phi(s,t)\equiv  \gamma(s) \odot \alpha(t)=
(\gamma_1(s)\alpha_1(t),\gamma_2(s) \alpha_2(t)).
\]
Then $\phi=\gamma \odot \alpha$ is a Lagrangian conformal
immersion in ${\C}^2$ whose induced metric is given by
\begin{equation}\label{metric}
g=(|\gamma_1|^2
+|\alpha_1|^2)(ds^2+dt^2)=(\xi_3+\eta_3)(ds^2+dt^2).
\end{equation}
The Lagrangian angle map $\beta_\phi$ of the  Lagrangian conformal
immersion $\phi=\gamma \odot \alpha$ and the Legendrian angles
$\beta_\gamma$ and $\beta_\alpha$ of $\gamma$ and $\alpha$ are
related by
\begin{equation}\label{beta}
\beta_{\phi}(s,t)= \beta_{\gamma}(s)+\beta_{\alpha}(t)+\pi.\end{equation}
Setting $G\equiv G_{\!\phi}$  for sake of brevity, we have
\begin{equation}\label{G}
G = \beta_\gamma + (n_1-1) \arg \gamma_1 + (n_2-1)
\arg \gamma_2 + \beta_\alpha + (n_1-1) \arg \alpha_1 + (n_2-1)
\arg \alpha_2 + \pi.
\end{equation}
On the other hand,
$$ A_\phi=\frac{(|\gamma_1||\alpha_1|)^{n_1-1}(|\gamma_2||\alpha_2|)^{n_2-1}}{|\gamma_1|^2+|\alpha_1|^2}(G_s,G_t).$$
Hence
we deduce that
$$(|\gamma_1|^2+|\alpha_1|^2)\mbox{div }A_\phi $$
$$= \Big( (|\gamma_1||\alpha_1|)^{n_1-1}(|\gamma_2||\alpha_2|)^{n_2-1} G_s\Big)_s+ 
\Big((|\gamma_1||\alpha_1|)^{n_1-1}(|\gamma_2||\alpha_2|)^{n_2-1} G_t \Big)_t.$$
A particular solution arises when the two terms of the right hand side vanish, i.e.\ if there exist
two real constants $c_1$ and $c_2$ such that
$$|\gamma_1|^{n_1-1}|\gamma_2|^{n_2-1} G_s =c_1,$$
$$|\alpha_1|^{n_1-1}|\alpha_2|^{n_2-1} G_t = c_2.$$
Since we have
\begin{equation}\label{Gs}
G_s=\beta'_\gamma+\<\gamma_1',J\gamma_1\> \left( \frac{n_1-1}{|\gamma_1|^2} -
\frac{n_2-1}{|\gamma_2|^2} \right),
\end{equation}
\begin{equation}\label{Gt}
G_t=\beta'_\alpha+\<\alpha_1',J\alpha_1\> \left( \frac{n_1-1}{|\alpha_1|^2} -
\frac{n_2-1}{|\alpha_2|^2} \right),
\end{equation}
we obtain the following sufficient 
conditions  in order $\phi=\gamma
\odot  \alpha$ to satisfy $\mbox{ div } A_\phi=0$:
\begin{equation} \label{cond1}
\beta'_\gamma+\<\gamma_1',J\gamma_1\> \left( \frac{n_1-1}{|\gamma_1|^2} -
\frac{n_2-1}{|\gamma_2|^2} \right)\!=\! \frac{c_1}{
|\gamma_1|^{n_1-1} \, |\gamma_2|^{n_2-1}},
\end{equation}

\begin{equation}\label{cond2}
 \beta'_\alpha+\<\alpha_1',J\alpha_1\> \left( \frac{n_1-1}{|\alpha_1|^2} -
\frac{n_2-1}{|\alpha_2|^2} \right)\!=\! \frac{c_2}{
|\alpha_1|^{n_1-1}
 |\alpha_2|^{n_2-1}}.
\end{equation}

Finally, using the relations between the Legendrian curves $\gamma$ and $\alpha$
and their Hopf projections $\xi = \Pi \circ \gamma$ and $\eta = \Pi \circ \alpha$,
(cf Section 2.3) we get:
\begin{equation} \label{cond11}
k_\xi +(\xi \times \xi')_3 \left( \frac{n_1-1}{1/2+\xi_3} -
\frac{n_2-1}{1/2-\xi_3} \right)\!=\! \frac{c_1}{
(1/2+\xi_3)^{(n_1-1)/2} \, (1/2-\xi_3)^{(n_2-1)/2}},
\end{equation}

\begin{equation}\label{cond22}
k_\eta +(\eta \!\times\! \eta')_3 \left( \frac{n_1-1}{\eta_3-1/2}
+ \frac{n_2-1}{\eta_3+1/2} \right) \!=\! \frac{c_2}{
(\eta_3-1/2)^{(n_1-1)/2}
 (1/2+\eta_3)^{(n_2-1)/2}}.
\end{equation}
 The Legendrian lift $\gamma$ or $\alpha$ of a generic solution of Equation (\ref{cond11}) or 
(\ref{cond22}) is not expected to be closed, even if the projected curve $\xi$ or $\eta$ is so. Since we are
interested in compact examples, we point out that the
geodesic $\xi_0=\Pi\circ \gamma_0,$ where $\gamma_0(s)= (\cos s, \sin s
)$ trivially verifies (\ref{cond1}) with $c_1=0$ and the constant
curvature curve $\eta_\delta (t) = \Pi (\alpha_\delta (t))$, with
$$
\alpha_\delta(t)=\left(  \sinh \delta \, e^{i\coth \delta \, t},
\cosh \delta  \, e^{i  \tanh \delta \, t }\right),\  \delta
>0
$$
is a solution of (\ref{cond2}) for a suitable constant $c_2(\delta)$ depending on $\delta.$
 Moreover
$\alpha_\delta $ is a closed curve if and only if $\tanh^2 \delta
= q/r\in \Q$, $0<q<r$. In such a case $\alpha_\delta $ takes the form
$$
\alpha_{q,r}(t)= \frac{1}{\sqrt{r-q}}\left(  \sqrt q \,
e^{i\sqrt{r/q} \, t}, \sqrt{r} \, e^{i  \sqrt{q/r} \, t }\right),\
0<q<r.
$$
 It follows that the corresponding immersions
  $\phi_{q,r}:=\gamma_0 \odot \alpha_{q,r} $ are doubly periodic and
 their images are
 non-trivial Lagrangian tori. These tori, besides being solutions of
 $\mbox{ div } A_\phi=0$, turn out to be  H-minimal and self-similar for the mean curvature flow as well (cf \cite{CL}).
  
  \bigskip
  
 Summing up, 
   using the two constructions of H-minimal Lagrangian immersions (Corollaries \ref{Cor:Ex1} and \ref{Cor:Ex2}) and the existence result of C-minimal Legendrian immersions (Corollary \ref{coroleg}),  we are able to construct
families of  compact H-minimal Lagrangian immersions  of various topological types. In the first case, the  Lagrangian immersions 
$\Phi_p=\al_p \psi, p \in \Bbb N,$ constructed  using the countable family of closed curves $\al_p$ described in Section 4.2 and a $C$-minimal Legendrian immersion $\psi$ 
do not have parallel mean curvature vector  by Corollary \ref{Cor:Ex1}. Analogously,
   since the tori $\phi_{q,r}$ are not minimal
 and are not a Cartesian product of planar curves,  
 the immersions $\Phi_{q,r}$ do not have parallel mean curvature by Corollary  \ref{Cor:Ex2}.  Hence, we have got:

\begin{corollary} \label{corolag} Given $n$ integer numbers $m_1,...,m_n,$
there exist two countable families of compact immersions
$\Phi_p$ and $\Phi_{q,r}$ 
of $\T^{n} \times \prod_{j=1}^n \S^{m_j}$ into $\C^{N}$, 
where $ N = n+\sum_{j=1}^n m_j,$ which are H-minimal and non trivial, i.e.\
non minimal and whose mean curvature vector is not parallel.
\end{corollary}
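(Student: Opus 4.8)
The plan is to realise the two families by plugging into the two constructions of Corollaries \ref{Cor:Ex1} and \ref{Cor:Ex2} the ingredients already isolated above. As ``curve/surface'' inputs I would use the closed planar curves $\al_p$ of Section 4.2 (non-vanishing, with $A_{\al_p}=|\al_p|^{2n-2}\kappa_{\al_p^n}$ a non-zero constant) and the closed Lagrangian tori $\phi_{q,r}=\gamma_0\odot\al_{q,r}$ of Section 5.2 (satisfying $\mbox{div}\,A_\phi=0$); as Legendrian inputs I would use compact $C$-minimal Legendrian immersions of products of the form $\T^{k-1}\times\prod_{j=1}^{k}\S^{m_j}$, obtained by combining $k$ totally geodesic blocks $\S^{m_1},\dots,\S^{m_k}$ through $k-1$ applications of the binary construction of Section 4.3.3 (each step using a closed curve $\gamma_{n_1,n_2}$ and inserting one new $\S^1$ factor), which is precisely the iteration behind Corollary \ref{coroleg}.

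First I would treat the family $\Phi_p$. Take $\psi$ to be a compact $C$-minimal Legendrian immersion of $\T^{n-1}\times\prod_{j=1}^{n}\S^{m_j}$ into $\S^{2N-1}$, $N=n+\sum_{j=1}^{n}m_j$, built as above from the $n$ blocks $\S^{m_1},\dots,\S^{m_n}$, and set $\Phi_p:=\al_p\,\psi$ as in Theorem \ref{Th:Ex1}. Since $\al_p$ is a non-vanishing closed curve and $\psi$ is defined on a compact manifold, $\Phi_p$ is a singularity-free immersion of the compact manifold $\S^1\times\big(\T^{n-1}\times\prod_{j=1}^{n}\S^{m_j}\big)=\T^{n}\times\prod_{j=1}^{n}\S^{m_j}$ into $\C^{N}$; by Corollary \ref{Cor:Ex1} it is $H$-minimal because $A_{\al_p}$ is a non-zero constant and $\psi$ is $C$-minimal. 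It is not minimal, since that would require $A_{\al_p}=0$; and its mean curvature is not parallel, since the only remaining possibility in Corollary \ref{Cor:Ex1} is that $\al_p$ be a circle centred at the origin, whereas $\al_p$ is one of the non-embedded closed curves described in Section 4.2. As $p$ ranges over $\Bbb N$ this produces a countable family.

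I would treat $\Phi_{q,r}$ symmetrically via Theorem \ref{Th:Ex2} and Corollary \ref{Cor:Ex2}: take $\phi=\phi_{q,r}$ and choose compact $C$-minimal Legendrian immersions $\psi_1,\psi_2$ of $N_i=\T^{|S_i|-1}\times\prod_{j\in S_i}\S^{m_j}$, where $S_1\sqcup S_2=\{1,\dots,n\}$, so that $\T^2\times N_1\times N_2=\T^{n}\times\prod_{j=1}^{n}\S^{m_j}$. Since $\phi_{q,r}$ is doubly periodic and satisfies $\mbox{div}\,A_{\phi}=0$, and $\psi_1,\psi_2$ are $C$-minimal on compact manifolds, Corollary \ref{Cor:Ex2} yields that $\Phi_{q,r}$ is $H$-minimal. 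Non-triviality is obtained exactly as recalled just before the statement: $\phi_{q,r}$ is a non-minimal torus (hence $\Phi_{q,r}$ is not minimal) which is neither a Cartesian product of two planar curves nor congruent to $(\cos s\,e^{it},\sin s\,e^{it})$, so by Corollary \ref{Cor:Ex2} the mean curvature of $\Phi_{q,r}$ is not parallel. The pairs $0<q<r$ furnish the second countable family.

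The one step carrying genuine content is the non-triviality check, i.e.\ verifying that the explicit $\al_p$ and $\phi_{q,r}$ really fall outside the short list of degenerate cases of Corollaries \ref{Cor:Ex1} and \ref{Cor:Ex2} (a circle centred at the origin; a product of planar curves; the surface $(\cos s\,e^{it},\sin s\,e^{it})$; minimality)---but this is already contained in the descriptions of Sections 4.2 and 5.2. Everything else is bookkeeping: distributing the integers $m_j$ and the circle factors among the blocks so that the iterated construction outputs the prescribed topological type, and reading off $N=n+\sum_{j=1}^{n}m_j$ from the dimension counts in Theorems \ref{Th:Ex1} and \ref{Th:Ex2}.
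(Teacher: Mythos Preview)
Your proposal is correct and follows essentially the same approach as the paper: the paper's argument is the ``Summing up'' paragraph immediately preceding the corollary, which builds $\Phi_p=\al_p\psi$ from the closed curves $\al_p$ of Section~4.2 together with a $C$-minimal $\psi$ furnished by Corollary~\ref{coroleg}, builds $\Phi_{q,r}$ from the tori $\phi_{q,r}$ of Section~5.2 together with two $C$-minimal Legendrian factors, and then rules out parallel mean curvature via Corollaries~\ref{Cor:Ex1} and~\ref{Cor:Ex2} exactly as you do. Your write-up is in fact slightly more explicit than the paper's (you spell out the topological bookkeeping and also address case~(iii) of Corollary~\ref{Cor:Ex2}, which the paper passes over silently).
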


\end{document}